
\documentclass[a4paper]{article}
\usepackage{amsmath,amssymb,amsfonts,amsthm} 
\usepackage[utf8]{inputenc}
\usepackage{tikz,relsize} 

\usepackage{pgf}
\usepackage{tikz-cd}

\usepackage{hyperref}
\usepackage[noblocks]{authblk}

\usepackage[T1]{fontenc}

\usepackage{marginnote}
\usepackage{color}
\usepackage{changepage}
\strictpagecheck
%

\usepackage[normalem]{ulem}
\makeatletter
\def\redsout{%
\ULdepth=.5ex%
\ifmmode \ULdepth=1.0ex \fi%
\bgroup\markoverwith{\textcolor{red}{\rule[\ULdepth]{2pt}{1.5pt}}}\ULon%
}%
\makeatother
%


\usepackage[shortlabels]{enumitem}
\setlist[enumerate, 1]{label={\arabic*.}}
\newlist{enumroman}{enumerate}{1}
\setlist[enumroman, 1]{label={\arabic*)}}

\usepackage[backend=bibtex]{biblatex}
\bibliography{bibl}

\newcommand{\ses}[7]{\ensuremath{#1 \longrightarrow #2 \stackrel{#3}{\longrightarrow} #4 \stackrel{#5}{\longrightarrow} #6 \longrightarrow #7}}

\newcommand{\ns}[1]{\mathbb{#1}}
\newcommand\N{\ns{N}}
\newcommand\Z{\ns{Z}}

\newcommand\R{\ns{R}}

\usepackage[mathscr]{eucal}

\newcommand{\D}{\mathcal{D}}

\renewcommand{\S}{\mathcal{V}}

\newcommand{\s}[1]{\mathit{#1}}
\newcommand{\conj}[1]{\overline{#1}}

\DeclareMathOperator{\sw}{sw}
\DeclareMathOperator{\PA}{\mathcal P}
\DeclareMathOperator{\smr}{smr}
\DeclareMathOperator{\smc}{smc}
\DeclareMathOperator{\HWM}{\mathcal{H}}
\DeclareMathOperator{\Map}{Map}

\DeclareMathOperator{\Isom}{Isom}

\DeclareMathOperator{\GL}{GL}
\DeclareMathOperator{\GO}{O}
\DeclareMathOperator{\SO}{SO}
\DeclareMathOperator{\Hom}{Hom}
\DeclareMathOperator{\img}{Im}
\DeclareMathOperator{\vspan}{span}
\DeclareMathOperator{\Spin}{Spin}
\DeclareMathOperator{\GSpinc}{Spin^c}
\usepackage{xspace}
\newcommand{\spinc}{\text{spin$^c$}\xspace}
\newcommand{\Spinc}{\text{Spin$^c$}\xspace}

\numberwithin{equation}{section}
\newtheorem{thm}{Theorem}[section]
\newtheorem{lm}[thm]{Lemma}
\newtheorem{cor}[thm]{Corollary}
\newtheorem{prop}[thm]{Proposition}

\theoremstyle{definition}
\newtheorem{df}[thm]{Definition}
\newtheorem{ex}[thm]{Example}

\theoremstyle{remark}
\newtheorem{rem}[thm]{Remark}

\newenvironment{emphasis}%
{%
\setlength{\parindent}{0pt}%
\vspace{\topsep}\em%
}%
{\vspace{\topsep}}%

\author[*]{R. Lutowski} 
\author[*]{J. Popko}
\author[*]{A. Szczepa\'{n}ski}
\affil[*]{Institute of Mathematics, University of Gda\'{n}sk, Gda\'{n}sk, Poland}

\title{\Spinc structures on Hantzsche-Wendt manifolds}


\begin{document}

\maketitle

\begin{abstract}
Using a combinatorial description of Stiefel-Whitney classes of closed flat manifolds with diagonal holonomy representation, we show that 
no Hantzsche-Wendt manifold of dimension greater than three does not admit a \spinc structure.
\end{abstract}

\section{Introduction} 

Hantzsche-Wendt manifolds are examples of flat manifolds, i.e. closed Riemannian manifolds with vanishing sectional curvature. They are generalizations of the three-dimensional flat orientable manifold defined in \cite{HW36} and, following \cite{RS05}, we say that:

\begin{emphasis}
An orientable $n$-dimensional flat manifold is Hantzsche-Wendt if and only if its holonomy group is an elementary abelian $2$-group of rank $n-1$.
\end{emphasis}

Every $n$-dimensional flat manifold $X$ occurs as a quotient space of the action of $\Gamma$ on the euclidean space $\R^n$, where $\Gamma$ is a Bieberbach group, i.e. a torsion-free, co-compact and discrete subgroup of the group $\Isom(\R^n) = \GO(n) \ltimes \R^n$ of isometries of $\R^n$. $X$ is an Eilenberg-MacLane space of type $K(\Gamma,1)$. 
By Bieberbach theorems (see \cite{Sz12}), $\Gamma$ is defined by the following short exact sequence
\begin{equation}
\label{eq:bieberbach}
\ses{0}{\Z^n}{\iota}{\Gamma}{\pi}{G}{1},
\end{equation}
where $\iota(\Z^n)$ is the maximal abelian normal subgroup of $\Gamma$, $G$ is finite and coincides with the holonomy group of $X$. Moreover, by conjugations in $\Gamma$, $G$ acts in a natural way on $\Z^n$, giving it the structure of a $G$-module. 

Taking into account the above definition we will say that a Bieberbach group $\Gamma \subset \Isom^+(\R^n) = \SO(n) \ltimes \R^n$, defined by \eqref{eq:bieberbach}, is a \emph{Hantzsche-Wendt group} and $X=\R^n/\Gamma$ is a \emph{Hantzsche-Wendt manifold} (\emph{HW-group} and \emph{HW-manifold} for short) if $G \simeq C_2^{n-1}$.

Among many properties of HW-manifolds which were objects of research one can list the following: they exist only in odd dimensions \cite{MR99}, they are rational homology spheres \cite{Sz83} and cohomologically rigid \cite{PS16}. If $\Gamma$ is a HW-group then it is an epimorphic image of a certain Fibonacci group \cite{LM18} and if its dimension is greater than or equal to $5$, then its commutator and translation subgroups coincide \cite{P07}. One of the crucial -- for the purposes of this paper -- properties of HW-manifolds (HW-groups) is the one described in \cite{RS05}: they are \emph{diagonal}, i.e. there exists a $\Z$-basis $\mathcal{B}$ of the $G$-module $\Z^n$ such that 
\[
gb = \pm b
\]
for every $b \in \mathcal{B}$ and $g \in G$.

Now, let $n \geq 3$. The fundamental group $\pi_1(\SO(n))$ of the special orthogonal group $\SO(n)$ is of order $2$. The spin group $\Spin(n)$ is its double cover -- and the universal cover in fact. Let $\lambda_n \colon \Spin(n) \to \SO(n)$ be the covering map. A spin structure on a smooth orientable manifold $X$ is an equivariant lift of its frame bundle via $\lambda_n$. Its existence is equivalent to the vanishing of the second Stiefel-Whitney class $w_2(X)$ of $X$, see \cite[page 40]{F00}. In the case when $X$ is flat, it is closely connected to the Sylow 2-subgroup of its holonomy group \cite{DSS06} and can be determined by an algorithm \cite{LP15}. The three-dimensional HW-manifold has a spin structure (see 
\cite[Theorem VII.1]{Ki89}). But this is the only case -- by \cite[Example 4.6]{MP06} no other HW-manifold admits any spin structure. 

In the case when there are no spin structures, one can consider their complex analogue. We have that
\[
\GSpinc(n) := \left( \Spin(n) \times S^1 \right)/\langle (-1,-1) \rangle = \Spin(n) \times_{C_2} S^1
\]
is the double cover of $\SO(n) \times S^1$ for which  the \spinc structure is defined -- in analogy to the spin case -- with the covering map $\bar\lambda_n \colon \GSpinc(n) \to \SO(n) \times S^1$ given by
\[
\bar\lambda_n[x,z] := \left(\lambda_n(x),z^2\right).
\]
The manifold $X$ has a \spinc structure if and only if $w_2(X)$ is the $\bmod\,2$ reduction of some integral cohomology class $z \in H^2(X,\Z)$, see \cite[page 49]{F00}. We immediately get that existence of spin structures determines existence of \spinc structures -- in fact the former induces the latter, but not the other way around. For example, by an unpublished work \cite{TV} all orientable $4$-manifolds have some \spinc structures, but by \cite{PS10}, $3$ of the $27$ flat ones don't have any.

In this paper we prove that every HW-manifold of dimension greater than or equal to $5$ does not admit any \spinc structure. Note that some examples of non-\spinc HW-manifolds were given in \cite{GS13}.

The tools that we use have been introduced in \cite{PS16} and used for example in \cite{LPPS19}. They proved their effectiveness in cohomology-related properties of diagonal manifolds.

The structure of the paper is as follows. Sections 2 and 3 give a quick glance on a way of the encoding diagonal manifolds and their Stiefel-Whitney classes by certain matrices. This has been already presented in more detail in \cite{PS16} and \cite{LPPS19}. In Section 4 we give one of two theorems on conditions equivalent to the existence of \spinc structures on HW-manifolds. For our further analysis we introduce HW-matrices. This description of HW-manifolds was introduced in \cite{PS16} and is in fact one-to-one with the one given in \cite{MR99}. Technical Section 6 gives us some properties and formulas for matrices that we work with. The second theorem on conditions equivalent to the existence of \spinc structures on HW-manifolds is given in Section 7. After that we give a very specific form to a matrix which describes a (possible) \spinc HW-manifold and at last we show that this form can never occur. This proves that no HW-manifold can admit a \spinc structure.

\section{Diagonal flat manifolds}

In this section we give a combinatorial description of diagonal flat manifolds. This language is essential in the analysis of the Steifel-Whitney classes of such manifolds.

\begin{rem}
For any matrix $A$ by $A_{ij}, A_{i,j}$ or $A_i[j]$ we shall denote the element in the $i$-th row and $j$-th column of $A$. By $A_i$ we shall understand the $i$-th row of $A$.
\end{rem}

\begin{rem}
Let $k \in \N$. Cyclic groups of order $k$ with multiplicative and additive structure will be denoted by $C_k$ and $\Z_k:=\Z/k$, respectively. Note that in the natural way $\Z_k$  is ring and possibly -- a field.
\end{rem}

Suppose $\Gamma$ is a Bieberbach group defined by the short exact sequence \eqref{eq:bieberbach}. As mentioned in the introduction, conjugations in $\Gamma$ define a $G$-module $\Z^n$. To be a bit more precise, corresponding representation $\rho\colon G \to \GL_n(\Z)$ is called an \emph{integral holonomy representation of $\Gamma$} and it is given by the formula
\[
\rho_g(z)= \iota^{-1}(\gamma  \iota(z) \gamma^{-1}),
\]
where $z\in \Z^n, g \in G$ and $\gamma \in \Gamma$ is such that $\pi(\gamma) = g$. 
We will call $\Gamma$ \emph{diagonal} or \emph{of diagonal type}  if the image of $\rho$ is a subgroup of the group
\[
D = \{A \in\GL(n,\Z) : A_{ij} = A_{ji} = 0 \text{ and } A_{ii} = \pm 1 \text{ for } 1\leq i < j \leq n\} \cong C_2^n
\]
of diagonal matrices of $\GL(n,\Z)$.
It follows that $G = C_2^k$ for some $1\leq k\leq n-1$.

Let $S^1=\R/\Z$. As in  \cite{PS16} and \cite{LPPS19},
we consider the automorphisms $g_i:S^1\to S^1,$ given by
\begin{equation}\label{dictionary1}
g_0([t]) = [t], \;\; g_1([t]) =
\bigg[t+\frac{1}{2}\bigg], \;\; g_2([t]) = [-t], \;\;  g_3([t]) =\bigg[-t+\frac{1}{2}\bigg],
\end{equation}
for $t \in \R$.
Let $\D = \{ g_i\mid i = 0,1,2,3\}$. It is easy to see that $\D \cong C_2\times C_2$ and $g_3 = g_1 g_2.$ We define an action of $\D^n$ on $T^n$ by
\begin{equation}\label{action}
(t_1,\dots,t_n)(z_1,\dots, z_n) = (t_1 z_1,\dots,t_n z_n),
\end{equation}
for $(t_1,\dots,t_n)\in {\D}^n$ and $(z_1,\dots,z_n)\in T^n = \underbrace{S^1\times\dots\times S^1}_n.$

\noindent Any minimal set of generators of a group $C_2^d\subseteq {\D}^n$ defines a $({d\times n} )$-matrix with entries in $ {\D}$ which in turn defines a matrix $A$ with entries in the set $\S=\{\s0, \s1, \s2, \s3\}$ under the identification $\s i \leftrightarrow g_i$,  $0\leq i \leq 3$. Note that elements of $\S$ are written in italic. 

\begin{df}
The structure of an additive group on $\S$ is given by
\[
i+j=k \Leftrightarrow g_ig_j=g_k,
\]
for $i,j,k \in \S$. This way $\S=\Z_2 \oplus \Z_2$ is in the natural way a $\Z_2$-vector space.
\end{df}

\begin{ex}
The three-dimensional HW-group has generators:
\[
\left(
\begin{bmatrix}
1 & 0& 0\\
0 &-1& 0\\
0 & 0&-1\\
\end{bmatrix},
\begin{bmatrix}
\frac{1}{2}\\
\frac{1}{2}\\
0\\
\end{bmatrix}
\right),
\left(
\begin{bmatrix}
-1& 0& 0\\
 0& 1& 0\\
 0& 0&-1\\
\end{bmatrix},
\begin{bmatrix}
0\\
\frac{1}{2}\\
\frac{1}{2}\\
\end{bmatrix}
\right),
\]
hence the corresponding matrix $A \in \S^{2 \times 3}$ is of the form
\[
A = \begin{bmatrix}
\s1 & \s3 & \s2\\
\s2 & \s1 & \s3\\
\end{bmatrix}.
\]
\end{ex}

\begin{rem}
\label{rem:injection}
Whenever our calculations involve $\Z_2=\{0,1\}$ and $\S$, it is done by identifying $\Z_2$ with the subgroup $\{\s0,\s1\} < \S$.
\end{rem}

We have the following characterization of the action of $C_2^d$ on $T^n$ and the associated  orbit space $T^n/C_2^d$ via the matrix $A$.  

\begin{lm}[{\cite[page 1050]{PS16}}]
\label{char}  Let $C_2^d\subseteq {\D}^n$ and define the matrix $A\in \S^{d\times n}$ as above. Then:
\begin{enumerate}[(i)]
\item the action of $C_2^d$ on $T^n$ is free if and only if there is  $\s1$ in the sum of any distinct collection of rows of $A$,
\item  $C_2^d$ is the holonomy group of $T^n/C_2^d$ if and only if there is either $\s2$ or $\s3$ in the sum of any distinct collection of rows of $A$.
\end{enumerate}
\end{lm}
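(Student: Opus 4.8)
The plan is to reduce both statements to the coordinatewise nature of the action in \eqref{action}, combined with an elementary fixed-point analysis of the four generators of $\D$. First I would record two invariants of each $g_i$ acting on $S^1=\R/\Z$. On the one hand, among $g_0,g_1,g_2,g_3$ only $g_1$ is fixed-point-free: $g_0$ fixes every point, $g_2$ fixes $[0]$ and $[1/2]$, and $g_3$ fixes $[1/4]$ and $[3/4]$. On the other hand, the linear part (orientation behaviour) of $g_i$ is trivial exactly for $g_0,g_1$ and equals $-1$ for $g_2,g_3$. Under the identification $\S=\Z_2\oplus\Z_2$ these two invariants are precisely the two coordinate projections: the first detects a half-translation ($\s1,\s3$) and the second detects the reflection, i.e.\ a nontrivial linear part ($\s2,\s3$).

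Next I would translate ``distinct collection of rows'' into ``nontrivial element of $C_2^d$''. Since the rows of $A$ form a minimal generating set of $C_2^d$, they are $\Z_2$-linearly independent, so the map sending a subset of rows to its sum is a bijection from subsets onto $C_2^d$, under which the nonempty subsets correspond to the nontrivial elements. Because the action is coordinatewise and the additive structure on $\S$ was defined so that addition mirrors composition in $\D$, the $j$-th entry of a row-sum is exactly the automorphism of the $j$-th $S^1$-factor induced by the corresponding group element.

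For part (i) I would argue that a product element $h\in C_2^d$ has a fixed point in $T^n$ if and only if it has one in every factor, and by the fixed-point analysis above this happens precisely when no coordinate of $h$ equals $\s1$. Hence the action is free, i.e.\ every nontrivial $h$ is fixed-point-free, if and only if every nonempty row-sum contains an $\s1$, which is (i). For part (ii) I would use that the holonomy group of $T^n/C_2^d=\R^n/\Gamma$ is the image of $C_2^d$ under the linear-part homomorphism, whose kernel is exactly the set of elements acting as pure translations; this image is all of $C_2^d$ (equivalently, $\Z^n$ is the full translation lattice) if and only if no nontrivial $h$ acts as a pure translation, and by the linear-part analysis this means every nonempty row-sum has a coordinate in $\{\s2,\s3\}$, which is (ii).

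The main obstacle is making the holonomy statement in (ii) precise: I must identify the holonomy group of the quotient with the image of the linear-part map and verify that a nontrivial element with all-trivial linear parts is exactly one that is absorbed into the translation subgroup, so that the effective holonomy then drops below $C_2^d$. The fixed-point bookkeeping behind (i) is routine once the coordinatewise reduction is in place.
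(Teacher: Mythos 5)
The paper itself gives no proof of this lemma: it is quoted directly from \cite[page 1050]{PS16}, so there is nothing in-paper to compare your argument against. Judged on its own merits, your proof is correct and is the natural one: freeness reduces coordinatewise to the observation that $g_1$ is the only fixed-point-free element of $\D$, and the holonomy condition reduces to requiring that no nontrivial element of $C_2^d$ has all coordinates in $\{\s0,\s1\}$, i.e.\ acts on $T^n$ as a pure translation and hence gets absorbed into the translation subgroup of the corresponding crystallographic group $\Gamma$. The two supporting facts you rely on are exactly the right ones and are standard: a minimal generating set of the elementary abelian group $C_2^d$ is a $\Z_2$-basis, so nonempty subsets of rows biject (via row sums, which mirror composition in $\D^n$) with the nontrivial elements of $C_2^d$; and ``$C_2^d$ is the holonomy group'' means precisely that the natural surjection of $C_2^d \cong \Gamma/\Z^n$ onto the point group $\Gamma/(\Gamma\cap\R^n)$ is injective, which fails exactly when some nontrivial element acts as a pure translation, enlarging the translation lattice beyond $\Z^n$.
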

\noindent When the action of $C_2^d$ on $T^n$ defined by (\ref{action}) is free, we will say that the associated  matrix $A$ is \emph{free} and we will call it the \emph{defining matrix} of $T^n/C_2^d$. In addition, when $C_2^d$ is the holonomy group of $T^n/C_2^d$, we will say  that $A$ is \emph{effective}.

\section{Stiefel-Whitney classes of diagonal flat manifolds}

The goal of this section is to introduce a notation and some basic results on Stiefel-Whitney classes of diagonal flat manifolds. For more precise description see \cite{LPPS19} and \cite{PS16}.

Let $n \in \N$ and $\Gamma$ be an $n$-dimensional diagonal Bieberbach group, given by the extension \eqref{eq:bieberbach}, with non-trivial holonomy group
$G=C_2^d$ ($d > 0$). Let $A \in \S^{d \times n}$ be a defining matrix of the corresponding flat manifold $X=\R^n/\Gamma = T^n/C_2^{d}$.

It is well-known that
\[
H^*(C_2^{d};\Z_2)\cong \Z_{2}[x_1,\dots,x_{d}],
\] 
where $\{x_1,\dots, x_{d}\}$ is a basis of $H^{1}(C_2^{d},\Z_2) = \Hom(C_2^{d},\Z_2)$ (see \cite[Theorem 1.2]{CMR10}). 
Let 
\[
\pi^*\colon H^*(C_2^{d},\Z_2) \to H^*(\Gamma, \Z_2)
\]
be the induced cohomology ring homomorphism. By \cite[Proposition 3.2]{LPPS19} the total Stiefel-Whitney class is given by
\[
w(X) = \pi^*( \sw ) \in H^*(\Gamma,\Z_2),
\]
where
\begin{equation}
\label{eq:sw}
\sw = \prod_{j=1}^{n} (1+\alpha_j + \beta_j).
\end{equation}
In the above formula for every $1 \leq j \leq n$, $\alpha_j,\beta_j \in H^1(C_2^d,\Z_2)$ are the cocycles defined by
\[
\alpha_j = \sum_{k=1}^d \alpha(A_{kj})x_k, \beta_j = \sum_{k=1}^d \beta(A_{kj})x_k
\]
and the linear homomorphisms $\alpha,\beta \in \Hom_{\Z_2}(\S,\Z_2)$ are uniquely defined by the following rules
\[
\alpha(\s2)=\beta(\s3)=1 \text{ and } \alpha(\s3)=\beta(\s2)=0.
\]

Let 
\[
\pi^*_{(i)} \colon H^i(C_2^{n-1},\Z_2) \to H^i(\Gamma, \Z_2)
\]   
be the induced group cohomology homomorphism (restriction of $\pi^*$ to the $i$-th gradation), for $0 \leq i \leq n$. Using again \cite[Proposition 3.2]{LPPS19} and the five-term exact sequence for the extension \eqref{eq:bieberbach} (see \cite[Formula (7)]{LPPS19}) we get 
\begin{lm}
\label{lm:five-term-corollary}
$\pi^*_{(1)}$ is injective and the kernel of $\pi^*_{(2)}$ is spanned by
\[
\theta_j = \alpha_j \cup \beta_j = \alpha_j \beta_j
\]
for $1 \leq j \leq n$.
\end{lm}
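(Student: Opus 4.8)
The plan is to run the five-term (inflation--restriction) exact sequence attached to the group extension \eqref{eq:bieberbach} with trivial coefficients in $\Z_2$, and to extract both assertions from it. Since $\iota(\Z^n)$ is normal in $\Gamma$ with quotient the holonomy group $G$ and $\Z_2$ is a trivial module, the sequence (Formula~(7) of \cite{LPPS19}) reads
\[
0 \longrightarrow H^1(G,\Z_2) \xrightarrow{\ \pi^*_{(1)}\ } H^1(\Gamma,\Z_2) \xrightarrow{\ \mathrm{res}\ } H^1(\Z^n,\Z_2)^G \xrightarrow{\ \tau\ } H^2(G,\Z_2) \xrightarrow{\ \pi^*_{(2)}\ } H^2(\Gamma,\Z_2),
\]
where $\pi^*_{(1)}$ and $\pi^*_{(2)}$ are the inflation maps induced by $\pi$ and $\tau$ is the transgression. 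Exactness at the left-hand term is exactly the injectivity of $\pi^*_{(1)}$, which disposes of the first claim. Exactness at $H^2(G,\Z_2)$ yields $\ker \pi^*_{(2)} = \img \tau$, so the whole problem reduces to computing the image of the transgression.

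Next I would pin down the source $H^1(\Z^n,\Z_2)^G$. The key simplification is that $G$ acts on $\Z^n$ by diagonal $\pm 1$ matrices, so modulo $2$ this action is trivial: for $g \in G$ and $x \in \Z^n$ one has $g^{-1}x - x \in 2\Z^n$, whence every $\phi \in \Hom(\Z^n,\Z_2)$ is $G$-invariant. Therefore $H^1(\Z^n,\Z_2)^G = \Hom(\Z^n,\Z_2) \cong \Z_2^n$, with basis the reductions $y_1,\dots,y_n$ of the coordinate functionals dual to the standard basis of $\Z^n$. Consequently $\img \tau = \vspan\{\tau(y_1),\dots,\tau(y_n)\}$, and it only remains to evaluate $\tau$ on each $y_j$.

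The final, and most delicate, step is to show $\tau(y_j) = \theta_j = \alpha_j \cup \beta_j$. I would obtain this from the cocycle description underlying \cite[Proposition 3.2]{LPPS19}: the class $y_j$ selects the $j$-th coordinate circle, and its transgression is the $\bmod\,2$ reduction of the $j$-th component of the extension class of \eqref{eq:bieberbach}, i.e. the obstruction attached to the $j$-th column of the defining matrix $A$. Unwinding the automorphisms $g_{A_{kj}}$ of \eqref{dictionary1} through the homomorphisms $\alpha$ and $\beta$ identifies this obstruction with the cup product $\alpha_j \beta_j$, the two factors recording respectively the linear (rotation) and the translational contributions of the $j$-th column. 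Granting this, $\img \tau = \vspan\{\theta_1,\dots,\theta_n\}$, which is the second assertion.

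I expect the main obstacle to be precisely this last identification $\tau(y_j) = \alpha_j\beta_j$: the homological input (the five-term sequence together with the triviality of the $\bmod\,2$ action) is routine, whereas matching the transgression to the combinatorial product $\alpha_j\beta_j$ requires tracking the extension cocycle coordinate by coordinate, which is where the content of \cite{LPPS19} is genuinely used.
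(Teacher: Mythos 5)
Your proposal is correct and is essentially the paper's own argument: the paper deduces the lemma from the five-term exact sequence of \eqref{eq:bieberbach} (cited as \cite[Formula (7)]{LPPS19}) together with \cite[Proposition 3.2]{LPPS19}, which is exactly your mechanism---injectivity of $\pi^*_{(1)}$ by exactness, $\ker\pi^*_{(2)}=\img\tau$, triviality of the $\bmod\,2$ holonomy action so that $H^1(\Z^n,\Z_2)^G\cong\Z_2^n$, and $\tau(y_j)=\theta_j$. The transgression identification that you defer to a coordinate-by-coordinate cocycle computation is precisely the ingredient the paper also outsources to \cite{LPPS19}, and your sketch of it (pushing the $j$-th component of the extension class forward along reduction $\bmod\,2$) is sound.
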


\begin{rem}
Note that the polynomials $\sw, \alpha_j, \beta_j, \theta_j$, where $1 \leq j \leq n$, can be defined for any matrix $A \in \S^{d \times n}$. To emphasize this connection or in the case when it won't be clear from the context, we will add the superscript $A$ to them and write $\sw^A$ for example.
\end{rem}

\section{Bockstein maps and spin$^c$ structures}

We will keep the notation of the previous section and restrict our attention to the case of Hantzsche-Wendt manifolds of dimension greater than or equal to 5. Hence $n \geq 5$ is an odd integer and $d=n-1$. Let $\beta_\Gamma$ and $\tilde{\beta}_\Gamma$ be the Bockstein homomorphisms of cohomology groups of $\Gamma$ associated to the short exact sequences
\begin{equation}
\label{eq:bockstein}
\ses{0}{\Z_2}{\cdot 2}{\Z_4}{\text{mod}_2}{\Z_2}{0}
\end{equation}
and
\begin{equation}
\label{eq:bockstein2}
\ses{0}{\Z}{\cdot 2}{\Z}{\text{mod}_2}{\Z_2}{0}
\end{equation}
respectively. If $\rho \colon H^2(\Gamma,\Z) \to H^2(\Gamma,\Z_2)$ is the homomorphism induced by the $\bmod\,2$ map, then we have the following commutative diagram
\[
\begin{tikzcd}
H^1(\Gamma,\Z) \arrow{r} & H^1(\Gamma,\Z_2)  \arrow{r}{\tilde{\beta}_\Gamma} \arrow{dr}{\beta_\Gamma} & H^2(\Gamma,\Z) \arrow{d}{\rho} \\
 & & H^2(\Gamma,\Z_2)
\end{tikzcd}
\]
with the row forming an exact sequence (see \cite[Chapter 3.E]{H02}). By \cite[Theorem 3.1]{P07} $H_1(\Gamma) \cong \Z_2^{n-1}$. By \cite[Theorem 9.2]{Sz12} $H_2(\Gamma)$ is a finite group. Moreover from the universal coefficient theorem (\cite[Theorem 3.2]{H02}),
\[
H^1(\Gamma,\Z) = 0 \text{ and } H^1(\Gamma,\Z_2) \cong H^2(\Gamma,\Z) \cong \Z_2^{n-1}.
\]
Hence $\tilde{\beta}_\Gamma$ is an isomorphism and $\img \beta_\Gamma = \img \rho$. 

Let $\beta$ be the Bockstein homomorphism of cohomology groups of $C_2^{n-1}$ associated to the extension \eqref{eq:bockstein}. The homomorphism $\pi$ induces the commutative diagram
\[
\begin{tikzcd}
H^1(C_2^{n-1},\Z_2) \arrow{r}{\beta}  \arrow{d}{\pi^*_{(1)}}& H^2(C_2^{n-1},\Z_2) \arrow{d}{\pi^*_{(2)}} \\
H^1(\Gamma,\Z_2) \arrow{r}{\beta_\Gamma} & H^2(\Gamma,\Z_2)
\end{tikzcd}
\]
By Lemma \ref{lm:five-term-corollary}, $\pi^*_{(1)}$ is a monomorphism of the elementary abelian $2$-groups of rank $n-1$, hence it is an isomorphism and
\[
\img \rho = \img \beta_\Gamma = \img \beta_\Gamma \pi^*_{(1)} = \img \pi^*_{(2)}\beta = \img \pi^*\beta.
\] 
Let $\sw_2$ be the sum of degree $2$ terms of the polynomial $\sw$. Then $w_2(X) = \pi^*(\sw_2)$ and by definition the manifold $X=\R^n/\Gamma$ admits a \spinc structure if and only if $\pi^*(\sw_2) \in \img \pi^*\beta$. This condition is obviously equivalent to
\[
(\sw_2 + \ker \pi^*) \cap \img \beta \neq \emptyset.
\]
In addition, one can easily show that for every $x \in H^1(C_2^{n-1},\Z_2)$ and $a,b \in C_2^{n-1}$ we have
\[
\beta(x)(a,b) = x(a)x(b) = x^2(a,b),
\]
hence $\beta(x)=x^2$ and $\pi^*(\beta(x)) = \pi^*(x)^2$. Similarly, $\beta_\Gamma(f) = f^2$ for $f \in H^1(\Gamma,\Z_2)$.

Using Lemma \ref{lm:five-term-corollary}, we get the following theorem:
\begin{thm}
\label{thm:spinc_condition1}
Assume that $n\geq 5$ is an odd integer and $X$ is an $n$-dimensional Hantzsche-Wendt manifold. Let $A \in \S^{n-1 \times n}$ be a defining matrix of $X$. Then the following conditions are equivalent:
\begin{enumerate}
\item $X$ admits a \spinc structure.
\item $w_2(X) \in H^*(\Gamma,\Z_2)$ is a square.
\item There exists $x \in H^1(\Z_2^{n-1},\Z_2)$ such that $x^2 + \sw^A_2 \in \vspan\{\theta^A_1,\ldots,\theta^A_n\}.$
\end{enumerate}
\end{thm}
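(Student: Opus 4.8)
The plan is to prove the two equivalences $(1)\Leftrightarrow(2)$ and $(2)\Leftrightarrow(3)$ separately, drawing on the facts assembled immediately before the statement. For $(1)\Leftrightarrow(2)$ I would begin from the recalled criterion that $X$ carries a \spinc structure exactly when $\pi^*(\sw_2)\in\img\pi^*\beta$. Since we already know $\img\pi^*\beta=\img\beta_\Gamma$ and that $\beta_\Gamma$ acts on $H^1(\Gamma,\Z_2)$ by $f\mapsto f^2$, the image $\img\beta_\Gamma$ is the set of squares of degree-one classes. Because $H^*(\Gamma,\Z_2)$ is graded commutative over a field of characteristic $2$, any element $g$ satisfies $g^2=\sum_i g_i^2$ with $g_i$ its degree-$i$ component, so the degree-$2$ part of a square is automatically the square of a degree-one class; as $w_2(X)=\pi^*(\sw_2)$ sits in degree $2$, being a square is the same as lying in $\img\beta_\Gamma=\img\pi^*\beta$. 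This yields $(1)\Leftrightarrow(2)$.

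For $(2)\Leftrightarrow(3)$ the idea is to transport the squareness condition along $\pi^*$ from $H^*(\Gamma,\Z_2)$ down to $H^*(C_2^{n-1},\Z_2)$. If $(2)$ holds, write $w_2(X)=\pi^*(\sw_2)=f^2$ with $f\in H^1(\Gamma,\Z_2)$; since $\pi^*_{(1)}$ is an isomorphism we may pull $f$ back to $f=\pi^*(x)$ for a unique $x\in H^1(\Z_2^{n-1},\Z_2)$, and as $\pi^*$ is a ring homomorphism we get $f^2=\pi^*(x^2)$, hence $\pi^*(\sw_2+x^2)=0$ in characteristic $2$. Conversely, any $x$ with $\pi^*(\sw_2+x^2)=0$ yields the square $\pi^*(\sw_2)=\pi^*(x)^2$. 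So $(2)$ is equivalent to the existence of $x$ with $\sw_2+x^2\in\ker\pi^*_{(2)}$, and Lemma \ref{lm:five-term-corollary} identifies this kernel with $\vspan\{\theta_1,\dots,\theta_n\}$. Recalling that these $\theta_j$ are precisely the $\theta^A_j$ built from the defining matrix $A$, this is exactly condition $(3)$.

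I do not anticipate a genuine obstacle: once the preparatory results (the \spinc criterion, the equalities of images, the formulas $\beta_\Gamma(f)=f^2$ and $\beta(x)=x^2$, the isomorphism $\pi^*_{(1)}$, and the kernel computation of Lemma \ref{lm:five-term-corollary}) are in hand, the argument is essentially bookkeeping. The one point deserving care is the reduction of ``$w_2(X)$ is a square'' to ``$w_2(X)$ is the square of a degree-one class'', which relies on the grading and on $\pi^*$ commuting with squaring; I would state this explicitly so as not to conflate a general square in the ring with one arising from degree one.
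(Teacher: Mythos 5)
Your proposal is correct and follows essentially the same route as the paper: the paper's proof \emph{is} the discussion preceding the theorem (the \spinc criterion $\pi^*(\sw_2)\in\img\pi^*\beta$, the chain of image equalities, $\beta_\Gamma(f)=f^2$, the isomorphism $\pi^*_{(1)}$, and the kernel description of Lemma \ref{lm:five-term-corollary}), which you reassemble into the two equivalences $(1)\Leftrightarrow(2)$ and $(2)\Leftrightarrow(3)$. Your explicit remark that in characteristic $2$ the degree-two part of an arbitrary square is the square of a degree-one class is a worthwhile clarification of condition $(2)$ that the paper leaves implicit.
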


\section{HW matrices}


Let $n \in \N$. Every $n$-dimensional HW-manifold $X$ defines some matrix $A \in \S^{n-1 \times n}$. For the purpose of investigating \spinc properties of $X$ it will be more convenient to work with a square matrix -- a HW-matrix. HW-matrices were defined in \cite{PS16}.

Let $Z$ be a finite set. By $\PA(Z)$ we denote the algebra (over the field $\Z_2$) of subsets of $Z$. Just recall that the addition and multiplication in $\PA(Z)$ are defined by the symmetric difference and intersection respectively:
\[
\forall_{A,B \in \PA(Z)} A+B := (A \setminus B) \cup (B \setminus A) \text{ and } A \cdot B := A \cap B.
\]
Empty set and $Z$ are zero and one of this algebra, respectively. Let us note without a proof:
\begin{lm} \ 
\label{lm:set_algebra}
\begin{enumerate}
\item The map $|\cdot|_2 \colon \PA(Z) \to \Z_2$, given by 
\[U \mapsto |U| \bmod 2,\]
is linear. 
\item Every permutation of $Z$ is an algebra automorphism of $\PA(Z)$. 
\end{enumerate}
\end{lm}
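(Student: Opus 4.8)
The plan is to verify the two statements separately, as each is an elementary consequence of how the Boolean operations interact with cardinality and with set maps. Throughout I write $A+B$ for the symmetric difference and $A \cdot B$ for the intersection, as in the definition of $\PA(Z)$, and recall that the zero and one of this algebra are $\emptyset$ and $Z$.

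For part (1), I would first observe that, since $\PA(Z)$ is a $\Z_2$-vector space under $+$, claiming that $|\cdot|_2$ is linear amounts to additivity together with compatibility with the scalar action of $\Z_2$; the latter is automatic, as scaling by $0$ sends every set to $\emptyset$ and scaling by $1$ is the identity, both respected by $|\cdot|_2$. The only real content is the additivity $|A+B|_2 = |A|_2 + |B|_2$, which I would deduce from the inclusion--exclusion count $|A+B| = |A| + |B| - 2\,|A \cdot B|$: reducing this equality modulo $2$ annihilates the final term and yields exactly the claimed identity.

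For part (2), a permutation $\sigma$ of $Z$ induces the direct-image map $\sigma_* \colon \PA(Z) \to \PA(Z)$, $U \mapsto \sigma(U)$, and the plan is to show it is an algebra automorphism. The cleanest route uses that, $\sigma$ being a bijection, one has $x \in \sigma(U)$ if and only if $\sigma^{-1}(x) \in U$ for every $x \in Z$ and $U \subseteq Z$. Since intersection and symmetric difference are defined pointwise through membership, testing membership on $\sigma^{-1}(x)$ immediately gives $\sigma(A \cdot B) = \sigma(A) \cdot \sigma(B)$ and $\sigma(A+B) = \sigma(A)+\sigma(B)$. The unit and zero are preserved because $\sigma(Z)=Z$ and $\sigma(\emptyset)=\emptyset$, and $\sigma_*$ is bijective with two-sided inverse $(\sigma^{-1})_*$.

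I do not anticipate a genuine obstacle, since both parts are formal; the points I would make explicit are the reading of ``linear'' as $\Z_2$-linearity, which reduces part (1) to additivity, and the essential use of bijectivity of $\sigma$ in part (2) -- in particular the multiplicative law $\sigma(A \cdot B)=\sigma(A)\cdot\sigma(B)$ fails for a non-injective map, so it is exactly the permutation (not merely surjective) structure that is needed.
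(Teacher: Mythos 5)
Your proof is correct. The paper states this lemma explicitly ``without a proof,'' so there is nothing to compare against; your verification --- additivity of $|\cdot|_2$ via $|A+B| = |A|+|B|-2|A\cdot B|$ reduced modulo $2$, and the membership argument $x \in \sigma(U) \Leftrightarrow \sigma^{-1}(x) \in U$ for showing $\sigma_*$ preserves both operations --- supplies exactly the routine details the authors chose to omit.
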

\begin{rem}
We will use the notation $\PA_d := \PA(\{1,\ldots,d\})$ for $d \in \N$.
\end{rem}

\begin{df}
Let $d,n \in \N$ and $A \in \S^{d \times n}$. For $S \in \PA_n$ and $1 \leq i \leq d$ we have the sum of elements of the $i$-th row $A$ which lie in the columns from the set $S$:
\[
\smr_i^S(A) := \sum_{j \in S} A_{ij}
\]
and we denote $\smr_i^{\{1,\ldots,n\}}(A)$ simply by $\smr_{i}(A)$. In a similar way we define the column sums $\smc_j^S(A)$ (and $\smc_j(A)$) for $S \in \PA_d$ and $1 \leq j \leq n$.
Moreover, we define a map $J_A \colon \PA_d \to \PA_n$ as follows
\[
J_A(U) := \left\{ j : \smc_j^U(A) = \s1 \right\}.
\]
\end{df}

\begin{df}
The exists the unique $\Z_2$-linear involution $\conj{\cdot} \colon \S \to \S$ which maps $\s2$ to $\s3$. We call this map a \emph{conjugation}. To be explicit, we have
\[
\conj{\s0} = \s0, \conj{\s1} = \s1, \conj{\s2} = \s3 \text{ and } \conj{\s3} = \s2.
\]
\end{df}

\begin{df}
Let $A$ be a matrix with coefficients in $\S$. We call $A$:
\begin{itemize}
\item \emph{self-conjugate} if $A^t = \overline{A}$, where $A^t$ is the transpose of $A$ and $\overline{A}$ is the element-wise conjugate of $A$;
\item \emph{distinguished} if it has $\s1$ on the main diagonal and $\s2$ or $\s3$ everywhere else.
\end{itemize}
\end{df}

\begin{rem}
Recall that we speak about a \emph{principal submatrix} of a given matrix if the sets of row and column indices which define it are the same (see \cite[Definition 6.2.5]{BS89} for example). We immediately get, that principal submatrices of self-conjugate and distinguished matrices are themselves self-conjugate and distinguished, respectively.
\end{rem}

\begin{lm}
\label{lm:column_sums}
Let $A \in \S^{k \times n}$ be distinguished, where $k \leq n$. Then the possible values for $\smc_j(A)$, where $1 \leq j \leq n$ are given by the following table:
\[
\begin{array}{r|c|c}
          & j \leq k & j > k \\ \hline
2 \mid k  & \s2\text{ or }\s3 & \s0\text{ or }\s1\\
2 \nmid k & \s0\text{ or }\s1 & \s2\text{ or }\s3
\end{array}
\]
\end{lm}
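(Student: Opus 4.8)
The plan is to collapse the whole statement into a single parity count. The key observation is that $\{\s0,\s1\} < \S$ is a subgroup of index $2$, so there is a well-defined $\Z_2$-linear functional $p \colon \S \to \Z_2$ with kernel $\{\s0,\s1\}$; explicitly $p(\s0)=p(\s1)=0$ and $p(\s2)=p(\s3)=1$ (this is just the functional $\alpha+\beta$). Hence for any $s \in \S$ we have $s \in \{\s0,\s1\}$ iff $p(s)=0$ and $s \in \{\s2,\s3\}$ iff $p(s)=1$, so it suffices to compute $p(\smc_j(A))$ for each $1 \leq j \leq n$, where $\smc_j(A)=\sum_{i=1}^{k} A_{ij}$.

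First I would use linearity of $p$ to write $p(\smc_j(A)) = \sum_{i=1}^{k} p(A_{ij})$. Since $A$ is distinguished, each diagonal entry equals $\s1$ and so contributes $0$, while every off-diagonal entry lies in $\{\s2,\s3\}$ and so contributes $1$. Thus $p(\smc_j(A))$ equals, modulo $2$, the number of \emph{off-diagonal} entries appearing in the $j$-th column, and the entire computation reduces to counting those.

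Next I would split into the two column ranges. If $j \leq k$, the $j$-th column contains exactly one diagonal entry, namely $A_{jj}=\s1$, and $k-1$ off-diagonal entries, so $p(\smc_j(A)) = (k-1)\bmod 2$. If $j > k$, then no row index $i \in \{1,\ldots,k\}$ can equal $j$, hence all $k$ entries are off-diagonal and $p(\smc_j(A)) = k \bmod 2$. Reading off the four cases according to the parity of $k$ reproduces the table exactly: $p(\smc_j(A))=1$, i.e.\ $\smc_j(A) \in \{\s2,\s3\}$, precisely when $j \leq k$ with $k$ even or when $j > k$ with $k$ odd, and $p(\smc_j(A))=0$, i.e.\ $\smc_j(A) \in \{\s0,\s1\}$, in the two complementary cases.

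There is no genuine obstacle here: the argument is a direct parity computation, and the only point requiring any attention is the bookkeeping of the single diagonal entry, which shifts the count by one exactly in the range $j \leq k$ and thereby produces the $k-1$ versus $k$ discrepancy responsible for the off-by-one pattern between the two columns of the table. Since all four listed possibilities genuinely occur, membership in a two-element subset is the sharpest conclusion available, which is precisely what the lemma asserts.
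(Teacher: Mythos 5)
Your proof is correct and takes essentially the same approach as the paper: the paper's one-line proof (``simple calculation of the parity of the number of $\s2$ and $\s3$ in each column'') is exactly your parity count, which you merely formalize by passing to the quotient homomorphism $p=\alpha+\beta$ with kernel $\{\s0,\s1\}$. The bookkeeping of the single diagonal entry for $j \leq k$ versus none for $j > k$ is the whole content, and you handle it correctly.
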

\begin{proof}
Simple calculation of the parity of the number of $\s2$ and $\s3$ in each column.
\end{proof}

\begin{df}[{\cite[Definition 2]{PS16}}]
Let $n \in \N$. We will call $A \in \S^{n \times n}$ a \emph{HW-matrix} if:
\begin{enumroman}
\item $A$ is distinguished;
\item $\smc_j(A) = \s0$ for every $1 \leq j \leq n$;
\item $J_A(U) \neq 0$ for every $U \in \PA_n \setminus \{0,1\}$.
\end{enumroman}
The set of HW-matrices of degree $n$, or $n$-HW-matrices for short, will be denoted by $\HWM_n$.
\end{df}

By Lemma \ref{lm:column_sums} we immediately get:
\begin{cor}
Every HW-matrix is of odd degree.
\end{cor}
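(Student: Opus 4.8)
The plan is to read off the degree parity directly from Lemma \ref{lm:column_sums}, exploiting the fact that a HW-matrix is simultaneously square, distinguished, and has all column sums equal to $\s0$. No new combinatorics is needed: the dependence of the column-sum parities on the number of rows is already recorded in that lemma, so the corollary should follow by specializing its hypotheses.

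First I would fix an $n$-HW-matrix $A \in \HWM_n$. By defining property (i), $A$ is distinguished, and since $A \in \S^{n \times n}$ I may apply Lemma \ref{lm:column_sums} with $k = n$. Because the matrix is square, every column index satisfies $1 \le j \le n = k$, so each column lands in the $j \le k$ case of the table in that lemma. Next I would argue by contradiction: suppose $n$ is even, i.e. $2 \mid k$. Then the $j \le k$ entry of the table forces $\smc_j(A) \in \{\s2,\s3\}$ for every $j$, which directly contradicts defining property (ii), demanding $\smc_j(A) = \s0$ for all $j$. Hence $n$ cannot be even. The odd case is moreover consistent, since when $2 \nmid k$ and $j \le k$ the lemma permits $\smc_j(A) \in \{\s0,\s1\}$, in particular the required value $\s0$.

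I do not anticipate any genuine obstacle here. The only point to be careful about is that it is precisely the \emph{square} shape of a HW-matrix that places every column in the $j \le k$ column of the table; conditions (i) and (ii) then do the rest, with property (iii) playing no role in this particular deduction.
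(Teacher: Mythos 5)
Your proof is correct and takes essentially the same route as the paper, which derives the corollary immediately from Lemma \ref{lm:column_sums}: since a HW-matrix is square and distinguished, every column falls under the $j \leq k$ case with $k=n$, and evenness of $n$ would force $\smc_j(A) \in \{\s2,\s3\}$, contradicting the defining condition $\smc_j(A)=\s0$. Your observation that property (iii) plays no role here is also accurate.
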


\begin{rem}
\label{rem:hwmats}
We can think of the above definition as coming from the encoding Hantzsche-Wendt groups presented in \cite{MR99}. In connection to this description we note:
\begin{enumerate}
\item Any row of a HW-matrix may be removed and the corresponding torus quotient will remain the same. In other words, the removal will make the matrix a defining and effective one for the same HW-manifold.
\item Every HW-manifold defines some HW-matrix.
\item There is an action of the group $G_n := C_2 \wr S_n$ on the set $\S^{n \times n}$. 
Namely, for every $A \in \S^{n \times n}$ we have that
\begin{enumerate}
\item $c_k$ conjugates the $k$-th column of $A$, where $c_k \in C_2^n$ has non-trivial element of $C_2$ in the $k$-th coordinate only;
\item $\sigma \cdot A := P_\sigma A P_\sigma^{-1}$, where $P_\sigma \in \GL_n(\Z)$ is the permutation matrix of $\sigma \in S_n$.
\end{enumerate}
\end{enumerate}
\end{rem}

Keeping the above remark in mind, we can reformulate \cite[Proposition 1.5]{MR99} as follows:

\begin{prop}
The HW-manifolds $X$ and $X'$, with corresponding HW-matrices $A,A' \in \S^{n \times n}$, are affine equivalent if and only if $A$ and $A'$ are in the same orbit of the action of the group $G_n$.
\end{prop}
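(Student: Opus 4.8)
The plan is to establish the biconditional by showing that affine equivalence of the manifolds $X, X'$ corresponds exactly to the combinatorial moves encoded by the two generator types of $G_n = C_2 \wr S_n$ acting on HW-matrices. The strategy rests on relating the geometric equivalence to an equivalence of the underlying Bieberbach groups (or equivalently the torus-quotient data $T^n/C_2^{n-1}$), and then translating the group-theoretic conjugation/isomorphism data into the matrix operations described in Remark \ref{rem:hwmats}(3). Since the statement is explicitly a reformulation of \cite[Proposition 1.5]{MR99}, much of the content will be a dictionary between the description there and the HW-matrix encoding; I expect the proof to be a translation argument rather than a genuinely new computation.

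First I would recall that by Bieberbach's rigidity theorem, two flat manifolds are affine equivalent if and only if their fundamental groups are isomorphic as abstract groups, and such an isomorphism is induced by an affine conjugation in $\Aff(\R^n) = \GL(n,\R) \ltimes \R^n$. For diagonal HW-groups, one then argues that any such affine equivalence must respect the splitting \eqref{eq:bieberbach}, carrying the maximal abelian normal subgroup $\iota(\Z^n)$ to its counterpart and thus inducing a compatible isomorphism of the holonomy data and the $\Z$-lattice with its diagonal $G$-action. The key reduction is that, because the holonomy representation is diagonal with respect to the distinguished basis $\mathcal{B}$, the only freedom in the conjugating affine map is a signed permutation of the coordinate axes together with a translation; the signed permutation is precisely the data of an element of $C_2^n \rtimes S_n = G_n$.

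Next I would show how each piece of this signed permutation acts on the HW-matrix. The symmetric-group part $\sigma \in S_n$ permutes the coordinates of $\R^n$, hence simultaneously permutes rows and columns of the generator data, which is exactly the conjugation $\sigma \cdot A = P_\sigma A P_\sigma^{-1}$ of \ref{rem:hwmats}(3b). The sign-change part $c_k$ reflects the $k$-th coordinate circle $S^1$; tracing this through the dictionary \eqref{dictionary1}, a reflection $[t] \mapsto [-t]$ swaps $g_2 \leftrightarrow g_3$ in the $k$-th column while fixing $g_0, g_1$, which is precisely the column conjugation $\conj{\cdot}$ of \ref{rem:hwmats}(3a). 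Conversely, any element of $G_n$ arises from such a geometric symmetry, so the orbit of $A$ under $G_n$ is exactly the set of HW-matrices giving affinely equivalent manifolds. The translational freedom in the conjugating affine map does not change the matrix (it only shifts the translation vectors, which are already normalized in the HW-encoding), so it plays no role in the orbit description.

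The main obstacle will be verifying that every affine equivalence is forced into this rigid signed-permutation form, i.e. that no ``exotic'' $\GL(n,\R)$ conjugation can relate two distinct HW-matrices outside the $G_n$-orbit. This amounts to checking that the centralizer and normalizer of the diagonal holonomy image $D \cong C_2^{n-1}$ inside $\GL(n,\R)$, intersected with the integral structure preserving the lattice and its marking, reduces to the monomial (signed permutation) matrices. One shows this by noting that the simultaneous eigenspace decomposition of the full-rank elementary abelian $2$-group acting diagonally is into one-dimensional coordinate lines, so any matrix commuting with the representation up to the holonomy action must permute these lines and scale them; integrality and the $\pm 1$ eigenvalues then pin the scalars down to signs. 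Once this rigidity is in place, the remaining verification that the induced matrix operations are exactly \ref{rem:hwmats}(3a)--(3b) is the routine bookkeeping I would defer, and the equivalence follows.
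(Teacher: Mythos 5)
Your overall architecture (Bieberbach rigidity, then reduction of the conjugating affine map to a signed permutation plus a translation via the common-eigenspace/normalizer argument) is sound, and it is in fact more than the paper itself provides: the paper gives no proof, presenting the proposition as a direct reformulation of \cite[Proposition 1.5]{MR99} through the dictionary of Remark \ref{rem:hwmats}. However, the dictionary step that you defer as ``routine bookkeeping'' contains a concrete error. Conjugation by the reflection $r\colon [t]\mapsto[-t]$ does \emph{not} swap $g_2\leftrightarrow g_3$: the reflection $r$ is $g_2$ itself, and $\D\cong C_2\times C_2$ is abelian, so conjugation by $r$ fixes all of $g_0,g_1,g_2,g_3$; explicitly,
\[
r g_3 r^{-1}([t]) = r\bigl(g_3([-t])\bigr) = r\bigl([t+\tfrac12]\bigr) = [-t-\tfrac12] = [-t+\tfrac12] = g_3([t]).
\]
Equivalently, at the level of $\Aff(\R^n)$, conjugation by a sign change $R_k$ sends a generator $(D,s)$ to $(D,R_k s)$, and since the translational coordinates lie in $\{0,\tfrac12\}$ modulo $1$ and $-\tfrac12\equiv\tfrac12$, the $C_2^n$ part of the group of signed permutation matrices acts \emph{trivially} on HW-matrices.

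What actually realizes the column conjugation $c_k$ is precisely the translational freedom you dismissed as playing no role: conjugation by $\tau\colon x\mapsto x+v$ sends $(D,s)$ to $\bigl(D,\,s+(I-D)v\bigr)$, so taking $v=\tfrac14 e_k$ adds $\tfrac12$ to the $k$-th translational coordinate exactly when the rotational part has $-1$ there; in the dictionary \eqref{dictionary1} this is $g_2\leftrightarrow g_3$ with $g_0,g_1$ fixed, i.e.\ conjugation of the $k$-th column. So both halves of your middle paragraph are inverted: the sign changes do nothing and the quarter-translations do everything (only $v_k$ modulo $\tfrac12\Z$ matters, giving one $C_2$ per column). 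As written, your surjectivity claim that ``any element of $G_n$ arises from such a geometric symmetry'' rests on the false mechanism and fails; it is repaired by replacing reflections with quarter-translations, after which your normalizer argument for the other direction (linear part a signed permutation contributing the $S_n$ factor, translation part contributing the $C_2^n$ factor) does go through.
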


\section{Square distinguished matrices}

The following section is of a bit technical nature. Its purpose is to present some properties of square distinguished matrices. We start with a negative result:

\begin{lm}
\label{lm:not_existence}
Let $n>1$ be an integer. There does not exist a matrix $M \in \S^{n \times n}$ such that:
\begin{enumerate}[label={(A\arabic*)}]
\item \label{enum:distinguished+selfconjugate} $M$ is distinguished and self-conjugate;
\item the first row of $M$ is of the form $M_1 = [\s1,\s2,\ldots,\s2]$;
\item \label{enum:colsum_global} $\smc_i M = \s1$ for $1 \leq i \leq n$;
\item \label{enum:colsum_submatrices} in every principal submatrix of $M$ of odd degree there exists a column with sum of elements equal to $\s1$.
\end{enumerate}
\end{lm}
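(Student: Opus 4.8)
The plan is to encode the off-diagonal part of $M$ as a tournament and to read conditions \ref{enum:colsum_global} and \ref{enum:colsum_submatrices} as parity statements about in- and out-degrees. By \ref{enum:distinguished+selfconjugate}, for each pair $i \neq j$ the unordered pair $\{M_{ij}, M_{ji}\}$ equals $\{\s2, \s3\}$, so I may orient an edge by declaring $i \to j$ when $M_{ij} = \s2$ (equivalently $M_{ji} = \s3$); this makes $\{1,\ldots,n\}$ the vertex set of a tournament $T$. I will first record the column-sum dictionary: since $M_{jj} = \s1$ and each off-diagonal entry of column $j$ is $\s2$ or $\s3$, a direct computation in $\S = \Z_2 \oplus \Z_2$ (using $\s1 = \s2 + \s3$) gives $\smc_j(M) = \s1 + (\deg^-_j \bmod 2)\,\s2 + (\deg^+_j \bmod 2)\,\s3$, where $\deg^-_j$ and $\deg^+_j$ are the in- and out-degrees of $j$ in $T$. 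Hence $\smc_j(M) = \s1$ if and only if both $\deg^-_j$ and $\deg^+_j$ are even. The same computation applies verbatim to any principal submatrix $M[W]$, which by the remark preceding Lemma \ref{lm:column_sums} is again distinguished and self-conjugate, so $\smc_j(M[W]) = \s1$ exactly when $j$ has even in- and out-degree inside the sub-tournament on $W$.

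With this dictionary the hypotheses become statements about $T$. Condition \ref{enum:colsum_global} says every vertex of $T$ has even in- and out-degree; in particular $\deg^-_j + \deg^+_j = n-1$ is even, forcing $n$ odd, hence $n \geq 3$. Condition \ref{enum:colsum_submatrices} says that for every odd $W \subseteq \{1,\ldots,n\}$ the sub-tournament on $W$ has a vertex with even in- and out-degree. The crucial step is to apply \ref{enum:colsum_submatrices} to the three-element subsets: if some $\{i,j,k\}$ induced a directed $3$-cycle, then all three of its vertices would have in-degree and out-degree equal to $1$, so by the dictionary every column of $M[\{i,j,k\}]$ would sum to $\s0$, contradicting \ref{enum:colsum_submatrices}. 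Therefore $T$ contains no $3$-cycle.

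To finish, I will invoke the classical fact that a tournament with no $3$-cycle is transitive (acyclic): a directed cycle $v_1 \to \cdots \to v_\ell \to v_1$ of minimal length $\ell \geq 4$ yields, upon examining the edge between $v_1$ and $v_3$, either the shorter cycle $v_1 \to v_3 \to \cdots \to v_1$ or the $3$-cycle $v_1 \to v_2 \to v_3 \to v_1$, a contradiction in either case. A transitive tournament on $n \geq 2$ vertices is a strict total order, so its out-degrees are exactly $0, 1, \ldots, n-1$; in particular some vertex has out-degree $1$, which is odd. This contradicts \ref{enum:colsum_global}, completing the proof. I note that the contradiction already follows from \ref{enum:distinguished+selfconjugate}, \ref{enum:colsum_global} and \ref{enum:colsum_submatrices}; hypothesis (A2) only fixes the first row and is not required.

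I expect the main obstacle to be conceptual rather than computational: recognizing that the entire statement collapses once \ref{enum:colsum_submatrices} is tested on $3 \times 3$ principal submatrices, i.e.\ that the combinatorial content of the hypotheses is precisely ``$T$ has no $3$-cycle'' together with ``all degrees are even,'' which cannot coexist. The remaining ingredients -- the column-sum parity dictionary and the no-$3$-cycle $\Rightarrow$ transitive argument -- are routine, though one must check that the parity computation for a principal submatrix is literally the same as for $M$, which is exactly what the inheritance of the distinguished and self-conjugate properties guarantees.
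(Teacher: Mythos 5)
Your proof is correct, and it takes a genuinely different route from the paper's --- in fact it establishes a stronger statement. The paper argues by direct case analysis inside its $\S$-matrix calculus: from self-conjugacy and \ref{enum:colsum_global} it first deduces $\smr_i(M)=\s1$ for all $i$; it then uses the normalization (A2), invariance under permutations of $\{2,\ldots,n\}$, and \ref{enum:colsum_submatrices} (applied to odd principal submatrices of degree $n-2$ and of odd degree $b$ arising from a block decomposition of the second row) to constrain the second row, and finally exhibits indices $2,i,j$ whose principal submatrix is the cyclic matrix
\[
\begin{bmatrix}
\s1 & \s2 & \s3\\
\s3 & \s1 & \s2\\
\s2 & \s3 & \s1
\end{bmatrix},
\]
all of whose columns sum to $\s0$, contradicting \ref{enum:colsum_submatrices}. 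In other words, the paper uses the remaining hypotheses to \emph{produce} a $3$-cycle and contradicts \ref{enum:colsum_submatrices}, whereas you run the implication in the opposite direction: \ref{enum:colsum_submatrices}, tested only on $3$-element index sets, \emph{forbids} $3$-cycles; the classical fact that a $3$-cycle-free tournament is transitive then forces the out-degree sequence $0,1,\ldots,n-1$, and the odd value $1$ contradicts \ref{enum:colsum_global} via your (correct) dictionary, namely that $\smc_j(M)=\s1$ exactly when both the in- and out-degree of $j$ are even. Both proofs rest on the same local obstruction --- a cyclic $3\times 3$ principal submatrix has all column sums equal to $\s0$ --- but your tournament encoding replaces the paper's row-by-row analysis with a structural theorem. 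What your version buys: it needs neither hypothesis (A2) nor the full strength of \ref{enum:colsum_submatrices} (only its restriction to triples), so the lemma remains true with these hypotheses weakened, and the combinatorial content is isolated cleanly. What the paper's version buys: it stays entirely within the matrix formalism developed and reused throughout the paper, importing no external graph-theoretic facts. Your supporting steps --- the parity dictionary, its applicability to principal submatrices via inheritance of the distinguished and self-conjugate properties, and the minimal-cycle proof that no $3$-cycle implies transitivity --- are all verified correctly.
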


\begin{proof}
Assume that such a matrix $M$ exists. We will list some of its properties.
\begin{enumerate}[label={(P\arabic*)}]
\item \label{enum:permutations} Action by permutations of the set $\{2,3,\ldots,n\}$ on $M$, as in Remark \ref{rem:hwmats}, does not change its properties \ref{enum:distinguished+selfconjugate}--\ref{enum:colsum_submatrices}.
\item \label{enum:rowsum} 
$\smr_i(M) = \s1$ for every $1 \leq i \leq n$, since
\[
\smr_i(M) = \sum_{j=1}^n M_{ij} = \sum_{j=1}^n \conj{M_{ji}} = \conj{\sum_{j=1}^n M_{ji}} = \conj{\smc_i(M)} = \conj{\s1} = \s1.
\]
\item \label{enum:odd} $n$ is odd, by Lemma \ref{lm:column_sums}.
\item $M_{2,1} = \s3$ by self-conjugacy of $M$.
\item \label{enum:only2s} The second row of $M$ cannot be of the form $[\s3,\s1,\s2,\ldots,\s2]$, otherwise
\[
\smr_2(M) = \s3+\s1+(n-2)\s2 = \s2+\s2 = \s0,
\]
which contradicts \ref{enum:rowsum}.
\item \label{enum:only3s} The second row of $M$ cannot be of the form $[\s3,\s1,\s3,\ldots,\s3]$. Otherwise
\[
M = \begin{bmatrix}
* & A\\
* & B
\end{bmatrix},
\text{ where }
A = \begin{bmatrix}
\s2 & \ldots & \s2\\
\s3 & \ldots & \s3
\end{bmatrix} \in \S^{2 \times n-2}
\]
Using \ref{enum:colsum_global}, for every $i>2$ we get
\[
\s1 = \smc_i(M) = \s2+\s3+\smc_{i-2}(B) = \s1+\smc_{i-2}(B),
\]
hence $\smc_{i-2}(B)=\s0$ and this, together with \ref{enum:odd}, contradicts \ref{enum:colsum_submatrices}.
\item Using \ref{enum:permutations}, \ref{enum:only2s} and \ref{enum:only3s}, we can assume that 
\[
M_2 = [\s3,\s1,\underbrace{\s2,\ldots,\s2}_a,\underbrace{\s3,\ldots,\s3}_b],
\]
where $a,b>0$. Moreover, $a$ is even (and $b=n-2-a$ is odd), since
\begin{align*}
\s1 & = \smr_2(M) = \s3+\s1+a \cdot \s2 + b \cdot \s3 = \s2+a\cdot \s2+(n-2-a)\cdot \s3\\
  & = (1+a)\cdot \s2+(1+a)\cdot \s3 = (1+a)(\s2+\s3) = (1+a)\cdot \s1 = \s1+a\cdot \s1.
\end{align*}

\item \label{enum:only3s_submatrix} Let $M$ has the following block form
\[
M = \begin{bmatrix}
\s1 & \s2 & \s2 & \s2\\
\s3 & \s1 & \s2 & \s3\\
*   & *   & *   & C\\
*   & *   & *   & D\\
\end{bmatrix},
\]
where on the diagonal we have matrices of degree $1,1,a$ and $b$. There exists an element of $C$ equal to $\s2$. Otherwise, for every $i > a+2$, we have
\[
\s1 = \smc_i(M) = \s2+\s3+a \cdot \s3 + \smc_{i-a-2}(D) = \s1+\smc_{i-a-2}(D)
\]
and since $D$ is a principal submatrix of $M$ of odd degree, we get a contradiction with \ref{enum:colsum_submatrices}.
\end{enumerate}
By \ref{enum:only3s_submatrix} there exist $i$ and $j$, such that  $3 \leq i \leq a+2 < j \leq n$ and the principal submatrix $\Delta$ of $M$ given by indices $(2,i,j)$ is of the form
\[
\Delta = \begin{bmatrix}
\s1 & \s2 & \s3\\
* & \s1 & \s2\\
* & * & \s1
\end{bmatrix}.
\]
By self-conjugacy of $\Delta$ we immediately get
\[
\Delta = \begin{bmatrix}
\s1 & \s2 & \s3\\
\s3 & \s1 & \s2\\
\s2 & \s3 & \s1
\end{bmatrix},
\]
but this contradicts \ref{enum:colsum_submatrices}.
\end{proof}

\begin{rem}
To a logical sentence $\Theta$ we assign (in a natural way) an element $[\Theta] \in \Z_2$ as follows:
\[
[\Theta] = 1 \Leftrightarrow \Theta \text{ is true}.
\]
\end{rem}

\begin{rem}
Let $n \in \N, M \in \S^{n \times n}$ and $U \in \PA_n$. By $M_U$ we denote the sum of the rows of $M$ from the set $U$:
\[
M_U := \sum_{i \in U} M_i
\]
and $M_U[j]$ -- its $j$-th coordinate, for $1 \leq j \leq n$. We get 
\[
J_M(U) = \left\{ j : \smc_j^U(M) = \s1 \right\} = \{ j : M_U[j]=\s1 \}.
\]
\end{rem}

The following lemma, which describes map $J$ for distinguished matrices, extends \cite[Proposition 3]{PS16}.

\begin{lm}
\label{lm:jmap}
Let $n \in \N$, $M \in \S^{n \times n}$ be distinguished and $S,U \in \PA_n$. The following hold:
\begin{enumerate}
\item \label{e:dist:1} $J_M(U) = U$ if $|U|=1$.
\item \label{e:dist:odd} $J_M(U) \subset U$ if $|U|_2=1$.
\item \label{e:dist:even} $J_M(U) \cdot U = 0$ if $|U|_2=0$.
\item \label{e:dist:odd1} $|J_M(U)|_2 = \sum_{i,j \in U} M_{ij}$ if $|U|_2=1$.
\item \label{e:dist:even1} $|J_M(U)|_2 = \sum_{i,j \in U} M_{ij}+\sum_{i \in U} \smr_i(M)$ if $|U|_2=0$.
\item \label{e:dist:odd2} $|J_M(U)S|_2 = \sum_{j \in U}[j \in S]M_U[j]$ if $|U|_2=1$.
\item \label{e:dist:even2} $|J_M(U)S|_2 = \sum_{j \in U}[j \in S]M_U[j] + \sum_{i \in U}\smr_i^S(M)$ if $|U|_2=0$.
\end{enumerate}
\end{lm}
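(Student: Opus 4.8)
The plan is to reduce every assertion to a single linear functional on $\S$ together with the parity map $|\cdot|_2$. Besides $\alpha$ and $\beta$, I would introduce $\gamma := \alpha+\beta \in \Hom_{\Z_2}(\S,\Z_2)$; a direct check gives $\gamma(\s0)=\gamma(\s1)=0$ and $\gamma(\s2)=\gamma(\s3)=1$, so $\gamma$ detects exactly the off-diagonal values of a distinguished matrix. Two facts I would record first are: (a) for $x\in\S$ one has $x=\s1$ precisely when $\gamma(x)=0$ and $\beta(x)=1$, so on the subset $\{\s0,\s1\}$ the indicator $[x=\s1]$ coincides with $\beta(x)$, which is exactly the image of $x$ under the identification $\{\s0,\s1\}\cong\Z_2$ of Remark~\ref{rem:injection}; and (b) since $M$ is distinguished, $\gamma(M_{ij})=[i\neq j]$, whence by linearity
\[
\gamma\bigl(M_U[j]\bigr)=\sum_{i\in U}[i\neq j]\equiv |U|_2+[j\in U]\pmod 2.
\]
Finally, because $|\cdot|_2$ is linear (Lemma~\ref{lm:set_algebra}), $|J_M(U)|_2=\sum_{j=1}^n[M_U[j]=\s1]$ and $|J_M(U)\cdot S|_2=\sum_{j\in S}[M_U[j]=\s1]$.

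The structural statements follow immediately from (b). For part~(\ref{e:dist:1}) with $U=\{i\}$ I would read off $M_U[j]=M_{ij}$, which is $\s1$ iff $j=i$. If $|U|_2=1$ then $\gamma(M_U[j])=1$ for $j\notin U$, so $M_U[j]\in\{\s2,\s3\}$ and $J_M(U)\subseteq U$, giving part~(\ref{e:dist:odd}); symmetrically, if $|U|_2=0$ then $\gamma(M_U[j])=1$ for $j\in U$, so $J_M(U)\cap U=\emptyset$, which is part~(\ref{e:dist:even}).

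For the four counting identities the common move is to restrict the sum $\sum_j[M_U[j]=\s1]$ (or its $S$-weighted analogue) to the index set on which $\gamma(M_U[j])=0$, namely $j\in U$ when $|U|$ is odd and $j\notin U$ when $|U|$ is even, using parts~(\ref{e:dist:odd}) and~(\ref{e:dist:even}). On that set $M_U[j]\in\{\s0,\s1\}$, so by (a) I may replace $[M_U[j]=\s1]$ by $\beta(M_U[j])=\sum_{i\in U}\beta(M_{ij})$ and expand using linearity of $\beta$. In the odd cases~(\ref{e:dist:odd1}) and~(\ref{e:dist:odd2}) the indices already match and, since $M_U[j]$ itself equals $\beta(M_U[j])$ under the identification for $j\in U$, the computation lands directly on the claimed right-hand side; note that~(\ref{e:dist:odd1}) is simply the $S=\{1,\dots,n\}$ specialization of~(\ref{e:dist:odd2}). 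In the even cases~(\ref{e:dist:even1}) and~(\ref{e:dist:even2}) the claimed right-hand side is indexed instead by $j\in U$; here the correction term $\sum_{i\in U}\smr_i(M)$ (resp.\ $\sum_{i\in U}\smr_i^S(M)$) supplies the full-range sum $\sum_{i\in U}\sum_{j}\beta(M_{ij})$ (resp.\ with $j$ restricted to $S$), so that after expanding with $\beta$ the contributions from $j\in U$ occur twice and cancel modulo $2$, leaving exactly the $j\notin U$ sum that computes the left-hand side.

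The main obstacle is the bookkeeping in the even-cardinality cases~(\ref{e:dist:even1}) and~(\ref{e:dist:even2}). There the summand $M_U[j]$ for $j\in U$ lies in $\{\s2,\s3\}$, so the right-hand side is \emph{not} termwise in $\{\s0,\s1\}$; one must verify that the full $\S$-valued sum nonetheless lands in $\{\s0,\s1\}$ — equivalently that its $\gamma$-component vanishes, which is where $|U|_2=0$ is used a second time — and then track the cancellation between the $j\in U$ and $j\notin U$ ranges, keeping careful account of the diagonal entries $M_{jj}=\s1$, which contribute to $\beta$. The remaining steps are routine parity bookkeeping.
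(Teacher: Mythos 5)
Your proof is correct and follows essentially the same route as the paper's: part 1 directly from distinguishedness, parts 2--3 by the parity of off-diagonal entries (your functional $\gamma=\alpha+\beta$ is just an explicit formulation of the counting rule the paper invokes from Lemma \ref{lm:column_sums}), parts 4--5 as the $S=\{1,\dots,n\}$ specialization of parts 6--7, and parts 6--7 by restricting the indicator sum to $j\in U$ (odd case) or its complement (even case) and cancelling the doubled $j\in U$ cross-terms mod $2$ --- exactly the paper's double-sum rearrangement, with your $\beta$ playing the role of the paper's identification $\Z_2\cong\{\s0,\s1\}\subset\S$. The only genuine addition is your explicit verification that the $\S$-valued right-hand sides in the even cases have vanishing $\gamma$-component and hence lie in $\{\s0,\s1\}$; the paper gets this for free by running the chain of equalities starting from the $\Z_2$-valued left-hand side.
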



\begin{proof}
Property \ref{e:dist:1} holds just because $M$ is distinguished -- in fact, we have 
\begin{equation}
\label{eq:map_j_on_sigletons}
\forall_{1 \leq i \leq n} J_M(\{i\}) = \{i\}.
\end{equation}
Properties \ref{e:dist:odd} and \ref{e:dist:even} hold by the same rule as in the proof of Lemma \ref{lm:column_sums}. This rule will be also used in the rest of the proof.

Note that \ref{e:dist:odd1} and \ref{e:dist:even1} follow from \ref{e:dist:odd2} and \ref{e:dist:even2} respectively, if one takes $S=\{1,\ldots,n\} = 1 \in \PA_n$.

Recall Remark \ref{rem:injection}, by which $\Z_2$ is a subgroup of $\S$.

If $|U|$ is odd then $M_U[j] \in \{\s0,\s1\}$ if and only if $j \in U$ and $[j \in J_M(U)] = M_U[j] \cdot [j \in U]$ for $1 \leq j \leq n$, hence
\[
|J_M(U)S|_2 = \sum_{j=1}^n [j \in S][j \in J_M(U)] = \sum_{j=1}^n [j \in S][j \in U]M_U[j] = \sum_{j \in U}[j \in S]M_U[j].
\]

If $|U|$ is even on the other hand, we get that $M_U[j] \in \{\s0,\s1\}$ if and only if $j \not\in U$ and $[j \in J_M(U)] = M_U[j] \cdot [j \not\in U]$. In a similar fashion as above we have
\begin{align*}
|J_M(U)S|_2 &= \sum_{j=1}^n [j \in S][j \in J_M(U)] = \sum_{j=1}^n [j \in S][j \not \in U]M_U[j]\\
&= \sum_{j \in U} [j \in S]M_U[j] +\sum_{j=1}^n [j \in S]M_U[j]\\
&= \sum_{j \in U} [j \in S]M_U[j] +\sum_{j=1}^n [j \in S]\sum_{i \in U}M_{ij}\\
&= \sum_{j \in U} [j \in S]M_U[j] +\sum_{i \in U} \sum_{j \in S} M_{ij} = \sum_{j \in U} [j \in S]M_U[j] + \sum_{i \in U} \smr_i^S(M).
\end{align*}
\end{proof}

Directly from the definition of HW-matrices and the above lemma we get:

\begin{cor}[{\cite[Proposition 3]{PS16}}]
\label{cor:jmap_for_hw}
Let $M$ be a HW-matrix. Then:
\begin{enumroman}
\item $J_M(1) = 0$;
\item $J_M(U) \neq 0$ for $U \in \PA_n \setminus \{0,1\}$;
\item $J_M(U) = J_M(1+U)$.
\end{enumroman}
\end{cor}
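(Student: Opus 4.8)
The plan is to read off all three parts directly by unwinding the definition $J_M(U) = \{ j : \smc_j^U(M) = \s1 \}$ together with the two arithmetic conditions built into the definition of a HW-matrix, namely that all full column sums vanish and that $J_M(U)$ is nonzero for every proper nonempty $U$.

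For part (i) I would note that $1 = \{1,\ldots,n\} \in \PA_n$, so that $\smc_j^1(M) = \smc_j(M)$ is exactly the full $j$-th column sum. Hence $J_M(1) = \{ j : \smc_j(M) = \s1 \}$. The defining condition that $\smc_j(M) = \s0$ for every $1 \leq j \leq n$ then shows that no index $j$ belongs to this set, so $J_M(1) = 0$. Part (ii) requires no argument at all, since it is literally the third defining condition of a HW-matrix.

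For part (iii) the key observation is that $U$ and its complement $1 + U$ partition $\{1,\ldots,n\}$, so that for each fixed column $j$ one has the additive identity $\smc_j^U(M) + \smc_j^{1+U}(M) = \smc_j(M)$. By the vanishing-column-sum condition the right-hand side is $\s0$; since $\S = \Z_2 \oplus \Z_2$ is a $\Z_2$-vector space, in which every element is its own additive inverse, this forces $\smc_j^{1+U}(M) = \smc_j^U(M)$ for every $j$. Consequently the two column sums equal $\s1$ for exactly the same set of indices, which is precisely the statement $J_M(1+U) = J_M(U)$.

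I do not anticipate any real obstacle here, as the corollary is an immediate unwinding of the definitions. The only step that warrants a moment of care is the $\Z_2$-linear cancellation in part (iii): one must use that in $\S$ the relation $x + y = \s0$ is equivalent to $x = y$, which is exactly what turns the partition identity into the desired equality of the complementary column sums. (One could instead route parts (ii)–(iii) through the explicit description of $J_M$ given in Lemma \ref{lm:jmap}, but the direct computation above is shorter and fully self-contained.)
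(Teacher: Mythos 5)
Your proof is correct and matches the paper's approach: the paper offers no written argument, merely asserting that the corollary follows ``directly from the definition of HW-matrices and the above lemma,'' and your unwinding of the definition --- vanishing full column sums for (i), the third defining condition verbatim for (ii), and the $\Z_2$-cancellation in the partition identity $\smc_j^U(M)+\smc_j^{1+U}(M)=\smc_j(M)=\s0$ for (iii) --- is precisely that intended argument, made explicit without even needing Lemma~\ref{lm:jmap}.
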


\section{\Spinc structures and HW-matrices}

In this section we give a necessary and sufficient condition for existence of a \spinc structure on a manifold defined by a HW-matrix. 
Let us note an easy lemma.

\begin{lm}
Let $d\in \N$. A map $\kappa_A \colon \Z_2[x_1,\ldots,x_d] \to \Map(\PA_d,\Z_2)$ defined by
\[
\kappa_A(x_i)(U) = [ i \in U ],
\]
where $1 \leq i \leq d$ and $U \in \PA_d$, is an algebra homomorphism.
%
\end{lm}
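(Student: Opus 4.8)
The plan is to deduce the statement directly from the universal property of the polynomial algebra, so that almost all of the work consists of recording the algebra structure on the target. First I would make $\Map(\PA_d,\Z_2)$ into a commutative, associative, unital $\Z_2$-algebra by pointwise operations: for $f,g \in \Map(\PA_d,\Z_2)$ and $U \in \PA_d$ set $(f+g)(U) = f(U)+g(U)$ and $(fg)(U) = f(U)g(U)$, take the scalars to be the constant functions, and let the unit be the constant function $1$. Concretely this is just the product algebra $\prod_{U \in \PA_d}\Z_2$, so every algebra axiom is inherited coordinatewise from $\Z_2$ and requires no separate check.

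Next, since $\Z_2[x_1,\ldots,x_d]$ is the free commutative $\Z_2$-algebra on the generators $x_1,\ldots,x_d$, any assignment of these generators to elements of a commutative $\Z_2$-algebra extends uniquely to a $\Z_2$-algebra homomorphism. Applying this with the elements $\kappa_A(x_i) \in \Map(\PA_d,\Z_2)$, that is, the functions $U \mapsto [i \in U]$, produces the desired homomorphism, which is exactly the map $\kappa_A$ of the statement. Thus the lemma is immediate.

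If one prefers to avoid invoking the universal property, the same conclusion is reached by hand: extend $\kappa_A$ to monomials by $\kappa_A(x_{i_1}\cdots x_{i_k}) = \prod_l \kappa_A(x_{i_l})$ and then additively to all polynomials. Additivity and the relation $\kappa_A(1) = 1$ hold by construction, while multiplicativity reduces, via the distributive law, to the case of two monomials, where it follows from the commutativity and associativity of pointwise multiplication in $\Map(\PA_d,\Z_2)$. Either way there is no genuine obstacle here; the only point that needs attention is the well-definedness of the multiplicative extension, and this is precisely what the universal property guarantees. I would also note in passing that the subscript $A$ does not actually enter this statement, and that $\kappa_A$ is far from injective, since $[i \in U]^2 = [i \in U]$ in $\Z_2$ forces $\kappa_A(x_i^2) = \kappa_A(x_i)$.
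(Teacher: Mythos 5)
Your proof is correct. The paper itself gives no proof of this statement at all---it is introduced with the words ``Let us note an easy lemma'' and left unproved---and your argument (pointwise operations making $\Map(\PA_d,\Z_2)$ the product algebra $\prod_{U\in\PA_d}\Z_2$, then the universal property of the free commutative $\Z_2$-algebra $\Z_2[x_1,\ldots,x_d]$, which in particular settles well-definedness) is exactly the standard justification the authors implicitly rely on. Your side remarks are also accurate: the subscript $A$ plays no role in this lemma, and $\kappa_A$ annihilates $x_i^2+x_i$, which is consistent with the paper's subsequent Lemma claiming injectivity only on the degree-$2$ graded piece.
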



We will use the following properties of the map $\kappa_A$:

\begin{lm}
\label{lm:kappa}
Let $d,n \in \N$ and $A \in \S^{d \times n}$. Then:
\begin{enumroman}
\item $\kappa_A$ is a monomorphism in gradation $2$;
\item $\kappa_A(\theta^A_j)(U) = [j \in J_A(U)]$.
\end{enumroman}
\end{lm}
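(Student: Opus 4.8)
The plan is to prove the two parts in the order (ii) then (i), since the explicit formula in (ii) is what makes the injectivity in (i) transparent.

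For part (ii) I would exploit that $\kappa_A$ is an algebra homomorphism, so that, multiplication in $\Map(\PA_d,\Z_2)$ being pointwise,
\[
\kappa_A(\theta^A_j)(U) = \kappa_A(\alpha_j)(U)\cdot\kappa_A(\beta_j)(U).
\]
By linearity of $\kappa_A$ and $\kappa_A(x_k)(U)=[k\in U]$ the two factors are $\sum_{k\in U}\alpha(A_{kj})$ and $\sum_{k\in U}\beta(A_{kj})$, both elements of $\Z_2$. The conceptual point is that the pair $(\alpha,\beta)$ is an isomorphism $\S\xrightarrow{\sim}\Z_2\oplus\Z_2$; since $\s2+\s3=\s1$ one checks $\s1\mapsto(1,1)$, $\s2\mapsto(1,0)$, $\s3\mapsto(0,1)$. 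Hence, by linearity of $\alpha,\beta$, the two factors equal $1$ simultaneously exactly when $\smc^U_j(A)=\sum_{k\in U}A_{kj}=\s1$, that is exactly when $j\in J_A(U)$. As a product of two elements of $\{0,1\}$ is $1$ iff both are $1$, this yields $\kappa_A(\theta^A_j)(U)=[j\in J_A(U)]$.

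For part (i) I would argue directly that $\kappa_A$ is injective on the degree-$2$ summand, whose $\Z_2$-basis is $\{x_ix_j:1\le i\le j\le d\}$. Assume $\sum_{i\le j}c_{ij}x_ix_j$ lies in $\ker\kappa_A$, so that
\[
\sum_i c_{ii}[i\in U]+\sum_{i<j}c_{ij}[i\in U][j\in U]=0
\]
for every $U\in\PA_d$. Evaluating at a singleton $U=\{k\}$ annihilates every mixed term and forces $c_{kk}=0$; then evaluating at a pair $U=\{k,l\}$, with the square terms already known to vanish, leaves only the term $(i,j)=(k,l)$ and forces $c_{kl}=0$. Thus all coefficients vanish and $\kappa_A$ is a monomorphism in gradation $2$.

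I expect no genuine obstacle, as both computations are elementary; the only step demanding care is the identification in (ii) that $(\alpha,\beta)$ sends $\s1$ to $(1,1)$. This is exactly the fact that converts the pointwise product of the two $\Z_2$-values into the single condition $\smc^U_j(A)=\s1$ defining $J_A$, and it is the one place where the precise definitions of $\alpha$, $\beta$ and of the additive structure of $\S$ are used.
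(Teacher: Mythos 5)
Your proof is correct and takes essentially the same route as the paper: for (ii) both arguments factor $\kappa_A(\theta^A_j)(U)$ pointwise into $\bigl(\sum_{k\in U}\alpha(A_{kj})\bigr)\bigl(\sum_{k\in U}\beta(A_{kj})\bigr)$ and identify the condition that both factors equal $1$ with $\smc_j^U(A)=\s1$ --- the paper by counting the occurrences of $\s0,\s1,\s2,\s3$ in the column, you more cleanly by observing that $(\alpha,\beta)\colon\S\to\Z_2\oplus\Z_2$ is an isomorphism sending $\s1\mapsto(1,1)$. For (i) you both evaluate a kernel element on small sets, but your version is the more complete one: the paper writes the kernel element as $\sum_{i<j}\alpha_{ij}x_ix_j$, silently omitting the square monomials $x_i^2$ that also span gradation $2$, whereas your evaluation at singletons disposes of the squares before the pairs kill the mixed coefficients, so your argument proves injectivity on the full degree-$2$ component exactly as stated.
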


\begin{proof}
Let $\kappa = \kappa_A$ and
\[
x = \sum_{1 \leq i<j \leq d} \alpha_{ij} x_ix_j \in \ker\kappa,
\]
where $\alpha_{ij} \in \Z_2$. For any $1 \leq k<l \leq d$ and $U=\{k,l\}$ we have
\[
0 = \kappa\left(\sum_{1 \leq i<j \leq d} \alpha_{ij} x_ix_j\right)(U) = \sum_{1 \leq i<j \leq d} \alpha_{ij} \kappa(x_i)(U) \cdot \kappa(x_j)(U) = \alpha_{kl},
\]
hence $x = 0$.

Now take $1 \leq j \leq n$. We have
\[
\theta_j = \alpha_j \beta_j = \left( \sum_{i=1}^d \alpha(A_{ij})x_i \right) \left( \sum_{k=1}^d \beta(A_{kj})x_k \right)
\]
and in the consequence, for any $U \in \PA_d$,
\[
\kappa(\theta_j)(U) = \left( \sum_{i \in U} \alpha(A_{ij}) \right) \left( \sum_{k \in U} \beta(A_{kj}) \right).
\]
Denote by $a,b,c,d$ the number of $\s0,\s1,\s2,\s3$ in the rows from the set $U$ of $j$-th column of $A$, respectively. We get $\kappa(\theta_j)(U) = (b+c)(b+d) \bmod 2$, but
\[
(b+c)(b+d) \bmod 2 = 1 \Leftrightarrow (b+c) \bmod 2 = (b+d) \bmod 2  = 1.
\]
Hence $\kappa(\theta_j)(U)=1$ if and only if
\begin{align*}
\s1 &= (b+c)\cdot \s2 + (b+d) \cdot \s3\\
    &= b \cdot (\s2+\s3) + c \cdot \s2 + d \cdot \s3\\
    &= a \cdot \s0 + b \cdot \s1 + c \cdot \s2 + d \cdot \s3 = \smc_j^U(A),
\end{align*}
which by definition means, that $j \in J_A(U)$.
\end{proof}

\begin{prop}
\label{prop:squares}
Let $n>1$ be an odd integer and let $A \in \S^{n-1 \times n}$ be distinguished. The following conditions are equivalent:
\begin{enumerate}
\item \label{prop:sq:1} There exists $x \in H^1(C_2^{n-1},\Z_2)$ such that $x^2 + \sw^A_2 \in \vspan\{\theta^A_1,\ldots,\theta^A_n\}$.
\item \label{prop:sq:2} $\sigma_2 \in V_\delta:=\vspan\{ \theta^A_1-x_1^2,\ldots,\theta^A_{n-1}-x_{n-1}^2, \theta^A_n \}$, where $\sigma_2$ is the elementary symmetric polynomial of degree $2$ in variables $x_1,\ldots,x_n$.
\item \label{prop:sq:3} There exists $S \in \PA_n$, such that for every $U \in \PA_{n-1}$ the equality (in $\Z_2$) holds
\begin{equation}
\label{eq:cond_almost_spin}
|(J_A(U)+U)S|_2 = \binom{|U|}{2}.
\end{equation}
\end{enumerate}
\end{prop}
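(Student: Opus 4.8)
The overall plan is to push every statement through the algebra homomorphism $\kappa_A\colon \Z_2[x_1,\dots,x_{n-1}]\to\Map(\PA_{n-1},\Z_2)$ of the preceding lemma, which is injective in gradation $2$ and satisfies $\kappa_A(\theta^A_j)(U)=[j\in J_A(U)]$; together with the evident $\kappa_A(x_j^2)(U)=[j\in U]$, this turns every ``membership in a span of degree-$2$ classes'' into an identity between explicit $\Z_2$-valued functions of $U\in\PA_{n-1}$. It is convenient to abbreviate $f_j(U):=[j\in J_A(U)]$, $g_j(U):=[j\in U]$ (so $g_n\equiv 0$ because $U\subseteq\{1,\dots,n-1\}$) and $h_j(U):=[j\in J_A(U)+U]=f_j(U)+g_j(U)$. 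I would prove \eqref{prop:sq:2}$\Leftrightarrow$\eqref{prop:sq:3} first, since it is essentially a reformulation, and then tackle \eqref{prop:sq:1}$\Leftrightarrow$\eqref{prop:sq:2}, where the genuine work lies.

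For \eqref{prop:sq:2}$\Leftrightarrow$\eqref{prop:sq:3}: compute $\kappa_A(\theta^A_j-x_j^2)=f_j+g_j=h_j$ for $j<n$ and $\kappa_A(\theta^A_n)=f_n=h_n$, so that $\kappa_A(V_\delta)=\vspan\{h_1,\dots,h_n\}$; and compute $\kappa_A(\sigma_2)(U)=\binom{|U|}{2}$ straight from $\kappa_A(x_ix_j)(U)=[i\in U][j\in U]$. Because $\kappa_A$ is a monomorphism in degree $2$, condition \eqref{prop:sq:2} is then equivalent to $\big(U\mapsto\binom{|U|}{2}\big)\in\vspan\{h_j\}$. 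Finally, for $S\in\PA_n$ one has $\sum_{j\in S}h_j(U)=|(J_A(U)+U)S|_2$, so the statement ``$\binom{|U|}{2}\in\vspan\{h_j\}$'' is verbatim condition \eqref{prop:sq:3}.

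For \eqref{prop:sq:1}$\Leftrightarrow$\eqref{prop:sq:2}, the first step is to observe that $\sw^A_2$ and $\sigma_2$ differ only by a $\Z_2$-combination of the squares $x_1^2,\dots,x_{n-1}^2$. Distinguishedness of $A$ gives $\alpha_j+\beta_j=\sum_{k\neq j}x_k$ for $j\le n-1$ and $\alpha_n+\beta_n=\sum_{k}x_k$, and expanding the degree-$2$ part of $\sw^A=\prod_j(1+\alpha_j+\beta_j)$ yields $\sw^A_2-\sigma_2\in\vspan\{x_1^2,\dots,x_{n-1}^2\}$ after a short symmetric-function computation. Since the class $x^2$ in \eqref{prop:sq:1} runs over all of $\vspan\{x_1^2,\dots,x_{n-1}^2\}$ as $x$ runs over $H^1$, this correction is harmless, and \eqref{prop:sq:1} is equivalent to $\sigma_2\in\vspan\{x_1^2,\dots,x_{n-1}^2\}+\vspan\{\theta^A_1,\dots,\theta^A_n\}$. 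Applying $\kappa_A$ and rewriting $f_j=h_j+g_j$, this becomes $\binom{|U|}{2}\in\vspan\{g_j:j<n\}+\vspan\{h_j:j\le n\}$.

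Comparing with \eqref{prop:sq:2}, i.e. $\binom{|U|}{2}\in\vspan\{h_j\}$, makes the implication \eqref{prop:sq:2}$\Rightarrow$\eqref{prop:sq:1} immediate and isolates the main obstacle: deleting the extra $g_j=x_j^2$ summands in \eqref{prop:sq:1}$\Rightarrow$\eqref{prop:sq:2}. I expect to resolve this by evaluating a hypothetical representation
\[
\binom{|U|}{2}=\sum_{j\le n}\mu_j h_j(U)+\sum_{j<n}\nu_j g_j(U)
\]
at the singletons $U=\{i\}$. This is exactly where distinguishedness is decisive: by Lemma \ref{lm:jmap}\ref{e:dist:1} we have $J_A(\{i\})=\{i\}$, whence $h_j(\{i\})=0$ for every $j$, while $\binom{1}{2}=0$; the identity at $U=\{i\}$ therefore collapses to $0=\nu_i$. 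Hence all $\nu_i$ vanish, the $g_j$-part disappears, and we are left with $\binom{|U|}{2}\in\vspan\{h_j\}$, which is \eqref{prop:sq:2}. The hardest point is thus not any single computation but recognizing that the singleton values of the three conditions coincide and pin down the linear coefficients, so that the a priori larger span in \eqref{prop:sq:1} contracts to $\kappa_A(V_\delta)$.
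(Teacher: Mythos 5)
Your argument is correct, and for the equivalence \ref{prop:sq:2}$\Leftrightarrow$\ref{prop:sq:3} it is essentially the paper's own proof: compute $\kappa_A(\theta^A_j-x_j^2)(U)=[j\in J_A(U)+U]$, $\kappa_A(\theta^A_n)(U)=[n\in J_A(U)+U]$, $\kappa_A(\sigma_2)(U)=\binom{|U|}{2}$, and use injectivity of $\kappa_A$ in degree $2$ (Lemma \ref{lm:kappa}). The genuine divergence is in \ref{prop:sq:1}$\Leftrightarrow$\ref{prop:sq:2}. There the paper never leaves the polynomial ring: it splits the degree-$2$ component as $V=V_s\oplus V_f$ (squares versus non-square monomials), takes the projection $p\colon V\to V_f$, notes $p(\theta^A_j)=\theta^A_j-x_j^2$, $p(\theta^A_n)=\theta^A_n$ and $p(\sw^A_2)=\sigma_2$, and concludes at once because $\ker p=V_s$ is exactly the set of squares $x^2$ with $x\in H^1$. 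You instead transport condition \ref{prop:sq:1} through $\kappa_A$ into membership of $U\mapsto\binom{|U|}{2}$ in the enlarged span $\vspan\{g_j : j<n\}+\vspan\{h_j : j\le n\}$, and then eliminate the $g_j$-coefficients of an \emph{arbitrary} representation by evaluating at singletons $U=\{i\}$, where $J_A(\{i\})=\{i\}$ forces every $h_j(\{i\})$ to vanish and hence $\nu_i=0$. Both routes rest on the same two inputs, Lemma \ref{lm:kappa} and the fact that $\sw^A_2\equiv\sigma_2$ modulo squares (your symmetric-function computation; the paper's $p(\sw^A_2)=\sigma_2$), so the difference is purely in how the square terms are disposed of: the paper's projection does it in one linear-algebra stroke, while your singleton evaluation is a bit longer but keeps the entire proof on the $\Map(\PA_{n-1},\Z_2)$ side and makes visible how distinguishedness pins down the coefficients --- something the projection argument never needs to exhibit. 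Two minor points: Lemma \ref{lm:jmap} is stated for square matrices, so for your key fact $J_A(\{i\})=\{i\}$ with $A$ of size $(n-1)\times n$ you should invoke distinguishedness directly (it is exactly the one-line observation \eqref{eq:map_j_on_sigletons} in the paper); and $\sigma_2$ must be read as the elementary symmetric polynomial in $x_1,\ldots,x_{n-1}$ (the ``$x_n$'' in the statement is a typo), which is indeed how you use it.
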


\begin{proof}
We will omit the super and subscript $A$ in the proof.

Denote by $V$ the subspace of $\Z_2[x_1,\ldots,x_{n-1}]$ of polynomials of degree $2$. Let $V_s$ and $V_f$ be subspaces ov $V$ generated by monomials which are and are not squares, respectively. Let $p \colon V \to V_f$ be the projection coming from the decomposition $V = V_s \oplus V_f$. 
Note that 
\[
p(\theta_j) = \theta_j - x_j^2 \text{ and } p(\theta_n) = \theta_n
\]
for $1 \leq j < n$, hence condition \ref{prop:sq:1} is equivalent to
\begin{equation}
\label{eq:pif}
p(\sw_2) \in \vspan\{p(\theta^A_1),\ldots,p(\theta^A_n)\} = V_\delta,
\end{equation}
but directly from the formula \eqref{eq:sw}, since $n$ is odd, we have that $p(\sw_2) = \sigma_2$.

Assume $1 \leq j \leq n$ and let $\delta_j := p(\theta_j)$. For $U \in \PA_{n-1}$ we have that
\begin{equation}
\label{eq:kappa_delta}
\kappa(\delta_j)(U) = [ j \in J(U)+U ].
\end{equation}
Indeed, if $j < n$, using Lemma \ref{lm:kappa} we get
\begin{align*}
\kappa(\delta_j)(U) & = \kappa(\theta_j+x_j^2)(U) = \kappa(\theta_j)(U)+\kappa(x_j^2)(U) \\
& = \kappa(\theta_j)(U)+\kappa(x_j)(U)^2 = \kappa(\theta_j)(U)+\kappa(x_j)(U)\\
& = [j \in J(U)] + [j \in U] = [ j \in J(U)+U ]. 
\end{align*}
Additionally, $\delta_n = \theta_n$ and $n \not\in U$, hence
\[
\kappa(\delta_n)(U) = [n \in J(U)] = [n \in J(U)] + [n \in U] = [ n \in J(U)+U ].
\]

Suppose that $\sigma_2 = \sum s_j \delta_j \in V_\delta$ and let $S := \{ j : s_j = 1 \} \in \PA_n$. For every $U \in \PA_{n-1}$ we have
\[
\kappa(\sigma_2)(U) = \sum_{j=1}^n s_j \kappa(\delta_j)(U).
\]
Since
\[
\kappa(\sigma_2)(U) = \sum_{1 \leq k < l < n} [l \in U][k \in U] = \mathop{\sum_{k,l \in U}}_{k < l} 1 = \binom{|U|}{2}
\]
and
\begin{align*}
\sum_{j=1}^n s_j \kappa(\delta_j)(U) & = \sum_{j=1}^n [j \in S][j \in J(U)+U] \\ &= \sum_{j=1}^n [j \in S\cdot(J(U)+U)] = |S\cdot(J(U)+U)|_2,
\end{align*}
formula \eqref{eq:cond_almost_spin} follows.

Now assume that \eqref{eq:cond_almost_spin} holds for some $S \in \PA_n$ and every $U \in \PA_{n-1}$. By the above calculations it may me written as
\[
\sum_{j=1}^n [j \in S][j \in J(U)+U] = \kappa(\sigma_2)(U).
\]
Put $s_j = [j \in S]$ and use \eqref{eq:kappa_delta}. The above equation takes the form
\[
\sum_{j=1}^n s_j \kappa(\delta_j)(U) = \kappa(\sigma_2)(U).
\]
Recall that $U$ is any element of $\PA_{n-1}$. Using this and the linearity of $\kappa$, we get
\[
\kappa\left( \sum s_j \delta_j \right) = \kappa(\sigma_2).
\]
By Lemma \ref{lm:kappa}, $\sigma_2 = \sum s_j \delta_j \in V_\delta$.
\end{proof}

\begin{df}
Let $n \in \N, M \in \S^{n \times n}$ and $S \in \PA_n$.
\begin{enumerate}
\item We call $S$ a \emph{\spinc set} for $M$ if for every $U \in \PA_n$ the equation
\begin{equation}
\label{eq:cond_spin}
|(J_M(U)+U)S|_2 = \binom{|U|}{2}
\end{equation}
holds;
\item We call $S$ an \emph{almost \spinc set} for $M$ if for every $U \in \PA_{n-1}$ equation \eqref{eq:cond_spin} holds.
\end{enumerate}
If $S$ is a \spinc set for $M$, we call $(M,S)$ a \emph{\spinc pair}.
\end{df}

\begin{lm}
\label{lm:almost_spinc_set}
Let $n \in \N$ be odd, $M \in \HWM_n$ and $S \in \PA_n$.
\begin{enumerate}
\item If $S$ is an almost \spinc set for $M$, then 
\[
|S|_2 = \frac{n-1}{2}.
\]
\item If $S$ is an almost \spinc set set for $M$, then it is a \spinc set for $M$.
\end{enumerate}
\end{lm}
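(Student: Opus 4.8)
The plan is to handle both parts by substituting two carefully chosen families of sets $U$ into the defining equation \eqref{eq:cond_spin}, exploiting the three identities for $J_M$ recorded in Corollary \ref{cor:jmap_for_hw} together with the linearity of $|\cdot|_2$ from Lemma \ref{lm:set_algebra}. Throughout, $1 = \{1,\ldots,n\}$ denotes the top element of $\PA_n$.

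For the first part I would evaluate the almost spin$^c$ condition at the single set $U = \{1,\ldots,n-1\}$, which is the top element of $\PA_{n-1}$ and hence admissible. Here $1 + U = \{n\}$, so by $J_M(U) = J_M(1+U)$ and $J_M(\{n\}) = \{n\}$ (equation \eqref{eq:map_j_on_sigletons}, valid since $M$ is distinguished) we obtain $J_M(U) = \{n\}$ and therefore $J_M(U) + U = 1$. The left-hand side of \eqref{eq:cond_spin} then collapses to $|1 \cdot S|_2 = |S|_2$, while the right-hand side is $\binom{n-1}{2}$. Since $n$ is odd, $\binom{n-1}{2} \equiv \tfrac{n-1}{2} \pmod 2$, which gives the claimed identity $|S|_2 = \tfrac{n-1}{2}$.

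For the second part I would observe that $\PA_n \setminus \PA_{n-1}$ consists exactly of the sets $U$ with $n \in U$, so these are the only $U$ for which \eqref{eq:cond_spin} still needs to be checked. For such a $U$ I would set $V := 1 + U$, which lies in $\PA_{n-1}$, and rewrite $J_M(U) + U = (J_M(V) + V) + 1$ using $J_M(U) = J_M(V)$ and $U = 1 + V$. Distributing the product with $S$ over this sum and applying the linearity of $|\cdot|_2$ yields
\[
|(J_M(U)+U)S|_2 = |(J_M(V)+V)S|_2 + |S|_2 .
\]
The first summand equals $\binom{|V|}{2}$ by the almost spin$^c$ hypothesis applied to $V$, and the second equals $\tfrac{n-1}{2}$ by the first part; since $|V| = n - |U|$, it remains only to verify the purely numerical congruence
\[
\binom{n-1}{2} + \binom{n-|U|}{2} \equiv \binom{|U|}{2} \pmod 2 .
\]

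The only genuine work, and the step I expect to be the main (if mild) obstacle, is this last congruence. I would deduce it from the elementary identity $\binom{a}{2} + \binom{b}{2} + ab = \binom{a+b}{2}$ taken with $a = |U|$ and $b = n - |U|$: because $n$ is odd, the integers $|U|$ and $n-|U|$ have opposite parities, so $|U|(n-|U|)$ is even and vanishes mod $2$, leaving $\binom{|U|}{2} + \binom{n-|U|}{2} \equiv \binom{n}{2} \pmod 2$; and again because $n$ is odd one checks $\binom{n}{2} \equiv \binom{n-1}{2} \pmod 2$. Combining these two facts gives the required congruence, so \eqref{eq:cond_spin} holds for every $U$ with $n \in U$, and hence for all $U \in \PA_n$. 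This shows $S$ is a spin$^c$ set and completes the proof.
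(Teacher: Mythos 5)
Your proof is correct and takes essentially the same route as the paper's: part 1 by evaluating \eqref{eq:cond_spin} at $U=\{1,\ldots,n-1\}$ using $J_M(U)=J_M(1+U)=J_M(\{n\})=\{n\}$, and part 2 by passing to $V=1+U \in \PA_{n-1}$ and using linearity of $|\cdot|_2$. The only difference is that you spell out the binomial congruence $\binom{n-1}{2}+\binom{n-|U|}{2} \equiv \binom{|U|}{2} \pmod 2$ via the identity $\binom{a}{2}+\binom{b}{2}+ab=\binom{a+b}{2}$, a step the paper compresses into ``we again use the fact that $n$ is odd.''
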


\begin{proof}
Take $U = \{1,\ldots,n-1\}$. By Lemma \ref{lm:jmap} and Corollary \ref{cor:jmap_for_hw}, $J(U) = J(1+U) = J(\{n\}) = \{n\}$. Hence $J(U)+U = \{1,\ldots,n\}=1$, $(J(U)+U)S=S$ and we get
\[
|S|_2 = |(J(U)+U)S|_2 = \binom{|U|}{2} = \binom{n-1}{2} = \frac{n-1}{2}.
\]
Note again, that all equations above are in $\Z_2$. In particular the last one holds, because $n$ is odd.

Assume now that $S$ is an almost \spinc set for $M$. Equation \eqref{eq:cond_spin} holds for every $U \in \PA_{n-1}$. It is enough to show that it also holds whenever $n \in U$. In that case however $V = 1+U \in \PA_{n-1}$, so we have
\[
|(J(V)+V)S|_2 = \binom{|V|}{2}.
\]
By Corollary \ref{cor:jmap_for_hw}, $J(V) = J(1+U) = J(U)$, hence
\[
(J(U)+U)S = (J(V)+V+1)S = (J(V)+V)S + S
\]
and by linearity of $|\cdot|_2$ we have
\begin{align*}
|(J(U)+U)S|_2 = |(J(V)+V)S|_2 + |S|_2 & = \binom{|V|}{2}+\frac{n-1}{2}\\
& = \binom{n-|U|}{2} + \frac{n-1}{2} = \binom{|U|}{2},
\end{align*}
where in the last equality we again use the fact, that $n$ is odd.
\end{proof}

\begin{thm}
\label{thm:spinc_condition2}
Let $n \in \N, n \geq 5, M \in \HWM_n$ and let $X$ be the HW-manifold defined by $M$.The following conditions are equivalent:
\begin{enumerate}
\item $X$ admits a \spinc structure.
\item There exists a \spinc set for $M$.
\end{enumerate}
\end{thm}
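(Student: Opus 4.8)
The plan is to obtain the stated equivalence by chaining together the two characterizations already proved, namely Theorem \ref{thm:spinc_condition1} and Proposition \ref{prop:squares}, and then bridging the gap between \spinc sets and almost \spinc sets via Lemma \ref{lm:almost_spinc_set}. The only genuinely new content is a short identification of the map $J$ attached to the square matrix $M$ with the one attached to a defining matrix extracted from $M$; everything else is a bookkeeping concatenation. Since $M \in \HWM_n$ forces $n$ to be odd and $n \geq 5$ by hypothesis, we are exactly in the regime where Theorem \ref{thm:spinc_condition1} applies.

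First I would manufacture a defining matrix of $X$ out of $M$. By Remark \ref{rem:hwmats}(1), deleting a single row of the HW-matrix $M$ produces a matrix $A \in \S^{n-1 \times n}$ that is defining and effective for the very same manifold $X$. Deleting the $n$-th row keeps $A_{ii} = M_{ii} = \s1$ on the diagonal and $\s2$ or $\s3$ off it, so $A$ is distinguished and Proposition \ref{prop:squares} is applicable to it. Thus Theorem \ref{thm:spinc_condition1} (applied with the defining matrix $A$) tells us that $X$ is \spinc if and only if there exists $x \in H^1(C_2^{n-1},\Z_2)$ with $x^2 + \sw^A_2 \in \vspan\{\theta^A_1,\ldots,\theta^A_n\}$, and Proposition \ref{prop:squares} converts this into condition \eqref{eq:cond_almost_spin}: the existence of $S \in \PA_n$ with $|(J_A(U)+U)S|_2 = \binom{|U|}{2}$ for every $U \in \PA_{n-1}$.

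The key bookkeeping step is to replace $J_A$ by $J_M$ in that condition. For every $U \in \PA_{n-1}$ and every column index $j$ one has $\smc_j^U(A) = \sum_{i \in U} A_{ij} = \sum_{i \in U} M_{ij} = \smc_j^U(M)$, because $U \subseteq \{1,\ldots,n-1\}$ and $A$ agrees with $M$ on exactly those rows. Hence $J_A(U) = J_M(U)$ for all $U \in \PA_{n-1}$, and condition \eqref{eq:cond_almost_spin} for $A$ is literally the statement that $S$ is an almost \spinc set for $M$. So at this point $X$ admits a \spinc structure if and only if $M$ admits an almost \spinc set.

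Finally I would upgrade ``almost \spinc set'' to ``\spinc set''. By Lemma \ref{lm:almost_spinc_set}(2), every almost \spinc set for the HW-matrix $M$ is already a \spinc set, while conversely any \spinc set is trivially an almost \spinc set since $\PA_{n-1} \subseteq \PA_n$; therefore the existence of an almost \spinc set for $M$ is equivalent to the existence of a \spinc set for $M$. Concatenating the three equivalences yields the theorem. I do not expect a serious obstacle: all the real work sits inside Theorem \ref{thm:spinc_condition1}, Proposition \ref{prop:squares}, and Lemma \ref{lm:almost_spinc_set}, and the only care required is to confirm that row deletion preserves the distinguished property and that $J_A$ and $J_M$ coincide on $\PA_{n-1}$, which is the routine computation above.
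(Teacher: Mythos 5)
Your proposal is correct and follows essentially the same route as the paper's own proof: extract the distinguished defining matrix $A$ from the first $n-1$ rows of $M$ (justified by Remark \ref{rem:hwmats}), chain Theorem \ref{thm:spinc_condition1} with Proposition \ref{prop:squares}, identify $J_A = J_M$ on $\PA_{n-1}$ so that the resulting condition is exactly the existence of an almost \spinc set, and close the gap with Lemma \ref{lm:almost_spinc_set}. The paper states these same steps more tersely; your version merely spells out the verifications (distinguishedness of $A$, the column-sum computation behind $J_A(U)=J_M(U)$) that the paper leaves implicit.
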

\begin{proof}
By Lemma \ref{lm:almost_spinc_set} existence of a \spinc and an almost \spinc set are equivalent conditions.
Let $A$ be a matrix composed from the first $n-1$ rows of $M$. Clearly it is distinguished and by Remark \ref{rem:hwmats}, $A$ is defining and effective matrix for $X$. In order to get the desired equivalence, notice that for every $U \in \PA_{n-1}$ the equality
\[J_A(U) = J_M(U)\] 
holds, use Theorem \ref{thm:spinc_condition1} and Proposition \ref{prop:squares}.
\end{proof}


\section{Standard forms of \spinc pairs}

Recall that in Remark \ref{rem:hwmats} we have defined the action of the group $G_n = C_2 \wr S_n$ on the space $\S^{n \times n}$, for every $n \in \N$. We will show that in fact it can act on \spinc pairs.

\begin{lm}
\label{lm:equivalent_spinc_pairs}
Let $n \in \N, M \in \S^{n \times n}, S \in \PA_n$ be such that $(M,S)$ is a \spinc pair. Then for every $\sigma \in S_n$, $(\sigma M, \sigma S)$ is also a \spinc pair.
\end{lm}

\begin{proof}
Let $U \in \PA_n$ and $\sigma \in S_n$. Using an easy observation that $J_{\sigma M}(U) = \sigma J_M(\sigma^{-1}U)$ and Lemma \ref{lm:set_algebra}, we get
\begin{align*}
\left|\bigl(J_{\sigma M}(U)+U\bigr)(\sigma S)\right|_2 & = 
\left|\bigl(\sigma J_M(\sigma^{-1}U)+U\bigr)(\sigma S)\right|_2 =\\ 
&= \left| \sigma \biggl( \bigl( J_M(\sigma^{-1}U)+\sigma^{-1}U\bigr) S\biggr) \right|_2\\
& = \left| \bigl( J_M(\sigma^{-1}(U))+\sigma^{-1}(U) \bigr)S \right|_2 = \binom{|\sigma^{-1}(U)|}{2} = \binom{|U|}{2}.
\end{align*}
\end{proof}

Note, with the assumptions of the above lemma, that $G_n$ acts on $\PA_n$ by permutations, using the canonical epimorphism $G_n \to S_n$. 
Moreover, if $g \in G_n$ is an element which acts by conjugations of columns only, then $J_{gM} = J_M$, since $\conj{\s1} = \s1$. We immediately get

\begin{cor}
\label{cor:equivalent_spinc_pairs}
Let $n \in \N, M \in \S^{n \times n}, S \in \PA_n$ be such that $(M,S)$ is a \spinc pair. Then for every $g \in G_n$, $(gM, gS)$ is also a \spinc pair.
\end{cor}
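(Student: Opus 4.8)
The plan is to reduce the statement to the two special cases that are already available: permutation elements, handled by Lemma~\ref{lm:equivalent_spinc_pairs}, and pure column-conjugation elements, handled by the observation immediately preceding this corollary. First I would recall the structure of $G_n = C_2 \wr S_n$: it sits in a split short exact sequence whose kernel is the subgroup $C_2^n$ of elements acting by column conjugations only, and whose quotient $S_n$ is realised inside $G_n$ by the permutation matrices $P_\sigma$. Consequently every $g \in G_n$ factors as $g = c\,\sigma$ with $c \in C_2^n$ and $\sigma \in S_n$, so that $gM = c(\sigma M)$ while, because the action of $G_n$ on $\PA_n$ goes through the canonical epimorphism $G_n \to S_n$ and $c$ lies in its kernel, we have $gS = \sigma S$.

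Next I would track the effect of the factor $c$ on the quantities entering the \spinc condition~\eqref{eq:cond_spin}. On the matrix side, since $c$ conjugates columns only and $\conj{\s1} = \s1$, the set of columns whose $U$-restricted sum equals $\s1$ is unchanged, so $J_{gM} = J_{c(\sigma M)} = J_{\sigma M}$ (this is exactly the remark preceding the corollary, applied with $\sigma M$ in place of $M$). On the set side, $gS = \sigma S = c(\sigma S)$, as $c$ acts trivially on $\PA_n$. Both facts together let me replace $g$ by $\sigma$ inside the \spinc defect without altering its value.

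Combining these observations, for every $U \in \PA_n$ I would compute
\[
\bigl|(J_{gM}(U)+U)(gS)\bigr|_2 = \bigl|(J_{\sigma M}(U)+U)(\sigma S)\bigr|_2,
\]
which is precisely the \spinc defect of the pair $(\sigma M, \sigma S)$. By Lemma~\ref{lm:equivalent_spinc_pairs} this pair is itself a \spinc pair, so the right-hand side equals $\binom{|U|}{2}$ for all $U \in \PA_n$. Hence $gS$ is a \spinc set for $gM$ and $(gM, gS)$ is a \spinc pair, as required.

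I do not anticipate a substantive obstacle here, since the corollary is essentially the composition of two cases already proved. The only point demanding care is the bookkeeping of the semidirect-product decomposition: one must apply the column-conjugation factor $c$ \emph{after} the permutation $\sigma$ on the matrix, and note that it acts trivially on the set $S$, so that the displayed equality of \spinc defects is literal rather than holding only up to a relabelling. Keeping the left-action conventions of Remark~\ref{rem:hwmats} consistent throughout is what makes the reduction go through cleanly.
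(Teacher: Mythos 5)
Your proof is correct and follows essentially the same route as the paper: the corollary is obtained there by combining Lemma~\ref{lm:equivalent_spinc_pairs} with the two observations stated just before it (that $G_n$ acts on $\PA_n$ through the epimorphism $G_n \to S_n$, and that column conjugations leave $J_M$ unchanged since $\conj{\s1}=\s1$). Your write-up merely makes explicit the semidirect-product factorization $g = c\,\sigma$ that the paper leaves implicit, which is a fine level of added detail.
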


\begin{lm}
\label{lm:standard_form_1}
Let $n \in \N$ and $M \in \S^{n \times n}$ be distinguished and such that
\[
|J_M(U)|_2 = 1
\]
for every two-element set $U \in \PA_n$. Then there exists an integer $k$, such that $2k \geq n$ and in the orbit $G_nM$ there exists a matrix $M'$ in the following block form
\[
M' = \begin{bmatrix}
A & C\\
C^t & B
\end{bmatrix},
\]
where $A$ and $B$ are self-conjugate of degree $k$ and $n-k$, respectively. Moreover
\begin{equation}
\label{eq:sum_rows_groups}
\smr_1(M') = \ldots = \smr_k(M') \neq \smr_{k+1}(M') = \ldots = \smr_n(M').
\end{equation}
\end{lm}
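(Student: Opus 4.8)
The plan is to read off the entire block decomposition from one piece of data per index — the value of the row sum $\smr_i(M)$ — after converting the hypothesis on two-element sets into a statement about these row sums and about the symmetry type of each transposed pair $\{M_{pq},M_{qp}\}$. Concretely, I fix $p \neq q$ and apply Lemma~\ref{lm:jmap}(\ref{e:dist:even1}) to $U = \{p,q\}$; since $M_{pp} = M_{qq} = \s1$ cancel, this yields the identity in $\S$
\[
\s1 = |J_M(\{p,q\})|_2 = M_{pq} + M_{qp} + \smr_p(M) + \smr_q(M).
\]

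Both $M_{pq}$ and $M_{qp}$ lie in $\{\s2,\s3\}$, so $M_{pq}+M_{qp}$ lies in the subgroup $\{\s0,\s1\}$, equalling $\s1$ exactly when $M_{qp}=\overline{M_{pq}}$ and $\s0$ exactly when $M_{pq}=M_{qp}$. Hence $\smr_p(M)+\smr_q(M) \in \{\s0,\s1\}$ as well, so any two row sums differ by an element of $\{\s0,\s1\}$ and therefore all the $\smr_i(M)$ lie in one coset of $\{\s0,\s1\}<\S$ — a coset with only two elements, forcing at most two distinct row-sum values. From the displayed identity I also read off the crucial dichotomy: $\smr_p(M)=\smr_q(M)$ exactly when $M_{qp}=\overline{M_{pq}}$, and $\smr_p(M)\neq\smr_q(M)$ exactly when $M_{pq}=M_{qp}$.

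The row-sum value then partitions $\{1,\ldots,n\}$ into two classes; letting $k$ be the size of the larger one gives $2k\geq n$, and I pick $\sigma\in S_n$ mapping the larger class onto $\{1,\ldots,k\}$ and the other onto $\{k+1,\ldots,n\}$, setting $M':=\sigma M = P_\sigma M P_\sigma^{-1}\in G_nM$. Permutation conjugation relocates entries without changing their values, so $M'$ is again distinguished, the dichotomy transfers to $M'$, and $\smr_i(M')=\smr_{\sigma^{-1}(i)}(M)$. In the block decomposition of $M'$ of sizes $k$ and $n-k$, the diagonal blocks $A,B$ consist of same-class pairs, where $M'_{qp}=\overline{M'_{pq}}$, hence $A^t=\overline{A}$ and $B^t=\overline{B}$; the off-diagonal blocks consist of cross-class pairs, where $M'_{qp}=M'_{pq}$, hence the lower-left block is exactly the transpose of the upper-right block $C$. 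Since row sums are constant on each class and differ across the two classes, after the permutation they give \eqref{eq:sum_rows_groups}.

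The step demanding the most care is the translation: the computation must stay inside $\S$ rather than being prematurely pushed into $\Z_2$, so that one sees at once that the off-diagonal sum lands in $\{\s0,\s1\}$, that the row sums occupy a single two-element coset, and that equality of two row sums is equivalent to conjugacy of the corresponding transposed pair. The only degenerate case is when all row sums coincide and one class is empty; then $k=n$, $B$ is the empty matrix, $M$ is already self-conjugate, and \eqref{eq:sum_rows_groups} holds vacuously. Note that no column conjugations are needed — a plain permutation already delivers $M'$.
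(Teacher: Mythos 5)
Your proof is correct, and its computational core is the same as the paper's: apply Lemma~\ref{lm:jmap}(\ref{e:dist:even1}) to a two-element set $U=\{p,q\}$, cancel the diagonal entries $\s1$ to get $M_{pq}+M_{qp}=\smr_p(M)+\smr_q(M)+\s1$, read off the dichotomy (equal row sums $\Leftrightarrow$ conjugate transposed pair, distinct row sums $\Leftrightarrow$ equal transposed pair), and sort indices by row-sum value to obtain the block structure. You deviate from the paper in two places, both in the direction of economy. First, the paper obtains ``at most two row-sum values, any two distinct ones summing to $\s1$'' from Lemma~\ref{lm:column_sums} applied to the distinguished matrix $M^t$; you extract the same fact from the displayed identity itself (all row sums lie in a single coset of $\{\s0,\s1\}$), so Lemma~\ref{lm:column_sums} is not needed at all. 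Second, you take $k$ to be the size of the larger row-sum class and use a bare permutation, whereas the paper, when the class of row~$1$ is the minority, first conjugates the first column (which fixes $\smr_1(M)$ and shifts every $\smr_i(M)$, $i>1$, by $\s1$, so the class of row~$1$ becomes the majority) and only then permutes by a $\sigma$ fixing $1$. For the lemma as stated your shortcut is perfectly fine, since $\sigma M\in G_nM$; but the paper's seemingly redundant column conjugation is not idle in context. It produces a transforming element that keeps index $1$ in the first (large) block and leaves a first row of the form $[\s1,\s2,\ldots,\s2]$ untouched, which is precisely what the proof of Lemma~\ref{lm:standard_form_3} tacitly relies on when it applies this lemma to the principal submatrix $N$ and then immediately invokes Lemma~\ref{lm:standard_form_2}, whose hypothesis is that special first row. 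So your argument proves the statement as written, while the paper's construction additionally delivers the normalization that the later proof needs; if one adopted your proof, Lemma~\ref{lm:standard_form_3} would need a short extra argument (or one would reinstate the column-conjugation trick there).
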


\begin{proof}
Since the matrix $M^t$ is distinguished, by Lemma \ref{lm:column_sums} we get that the set $\{ \smr_i(M) : 1 \leq i \leq n \}$ has at most two elements. Let $l = |\{ i : \smr_i(M) = \smr_1(M) \}|$. If $2l \geq n$ take $k=l$ and $M''=M$. Otherwise, construct $M''$ by conjugation of the first column of $M$. We have $\smr_1(M'') = \smr_1(M)$ and $\smr_i(M'') = \smr_i(M)+\s1$ for $i > 1$. Letting $k = |\{ i : \smr_i(M'') = \smr_1(M'') \}|$ we have $2k \geq n$.

There exists a permutation $\sigma \in S_n$, which fixes $1$ and such that $M' = \sigma M''$ is of the block form
\[
\begin{bmatrix}
A & C\\
D & B
\end{bmatrix},
\]
where $A,B$ are of degrees $k,n-k$ respectively and the equation \eqref{eq:sum_rows_groups} holds.

Let $U = \{i,j\}$ for $1 \leq i < j \leq n$. By our assumptions and Lemma \ref{lm:jmap} we have
\[
\s1 = M'_{ii} + M'_{ij} + M'_{ji} + M'_{jj} + \smr_i(M') + \smr_j(M')
\]
and hence
\begin{equation}
\label{eq:two_element_sum}
M'_{ij}+M'_{ji} = \smr_i(M') + \smr_j(M') + \s1
\end{equation}
Consider two cases:
\begin{enumerate}
\item $j \leq k$ or $i > k$. Equation \eqref{eq:two_element_sum} gives us $M'_{ij}+M'_{ji}=\s1$ and since $M'$ is distinguished, $M'_{ij} = \conj{M'_{ji}}$. Hence $A$ and $B$ are self-conjugate.
\item $i \leq k < j$ and hence $\smr_i(M') = \smr_j(M')+\s1$. Equation \eqref{eq:two_element_sum} gives us $M'_{ij}=M'_{ji}$, hence $D = C^t$. 
\end{enumerate}
\end{proof}

\begin{lm}
\label{lm:standard_form_2}
Let $n \in \N$ and $M \in \S^{n \times n}$ be distinguished in the following block form
\[
M = \begin{bmatrix}
A & C\\
C^t & B
\end{bmatrix},
\]
where $A,B$ are of degrees $k,l$, respectively. Assume that $k > 0$ and:
\begin{enumroman}
\item $A$ is self-conjugate;
\item $M_1 = [\s1,\s2,\ldots,\s2]$;
\item $J_M(\{1,i,j\}) \neq 0$ for $1 \leq i \leq k < j \leq n$.
\end{enumroman}
Then $C$ consists only of elements equal to $\s2$.
\end{lm}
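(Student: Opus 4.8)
The plan is to verify that every entry of the block $C$ equals $\s2$ by feeding carefully chosen three-element sets into hypothesis (iii). Since $M_1 = [\s1,\s2,\ldots,\s2]$, the whole first row of $C$ (the entries $M_{1j}$ with $j>k$) is already $\s2$, so the only real content is to establish $M_{ij}=\s2$ for $2 \leq i \leq k < j \leq n$. For each such pair I would apply (iii) to $U=\{1,i,j\}$ and recover the value of $M_{ij}$ from $J_M(U)$; note that these three indices are genuinely distinct, since $2 \leq i \leq k < j$.

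Before computing I would pin down two \emph{different} symmetries among the entries of the $3\times 3$ principal submatrix on rows and columns $1,i,j$. First, self-conjugacy of $A$ (applicable because $1,i\leq k$) gives $M_{i1} = \conj{M_{1i}} = \conj{\s2} = \s3$. Second, the block shape $\begin{bmatrix} A & C \\ C^t & B \end{bmatrix}$ imposes plain transpose symmetry \emph{across} the off-diagonal blocks, so $M_{j1}=M_{1j}=\s2$ and $M_{ji}=M_{ij}$ whenever $i\leq k<j$. Keeping these two rules apart is the one delicate point of the argument: the symmetry inside $A$ carries a conjugation, while the symmetry between $C$ and $C^t$ does not, and conflating them would corrupt the column sums.

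With this in hand the calculation is immediate. Because $|U|=3$ is odd, Lemma~\ref{lm:jmap} yields $J_M(U)\subseteq U$, so I only have to test membership of $1,i,j$. Using $M_{11}=M_{ii}=M_{jj}=\s1$ together with the entries identified above, the column sums over $U$ are
\[
\smc_1^U(M) = \s1 + \s3 + \s2 = \s0, \qquad \smc_i^U(M) = \s2 + \s1 + M_{ij} = \s3 + M_{ij} = \smc_j^U(M).
\]
Thus $1\notin J_M(U)$, while $i\in J_M(U) \Leftrightarrow \s3+M_{ij}=\s1 \Leftrightarrow M_{ij}=\s2 \Leftrightarrow j\in J_M(U)$. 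Hence $J_M(U)$ is either $\emptyset$ or $\{i,j\}$, and hypothesis (iii) rules out the empty set, forcing $M_{ij}=\s2$.

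Combining this with the first-row observation, every entry of $C$ equals $\s2$, as claimed. I expect no genuine obstacle beyond the bookkeeping flagged above: the substantive step is separating the conjugation-symmetry inside $A$ from the bare transpose-symmetry across $C/C^t$, after which all three column sums collapse to the single scalar condition $\s3 + M_{ij}=\s1$.
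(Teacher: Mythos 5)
Your proof is correct and takes essentially the same route as the paper: apply hypothesis (iii) to $U=\{1,i,j\}$, use self-conjugacy of $A$ to get $M_{i1}=\s3$ and the plain transpose symmetry between $C$ and $C^t$ to get $M_{j1}=\s2$, $M_{ji}=M_{ij}$, and then read off from the column sums that $J_M(U)$ is empty unless $M_{ij}=\s2$. Your explicit separation of the two symmetries and the handling of the row-$1$ entries of $C$ match the paper's argument (which displays the same $3\times 3$ principal submatrix and rules out $x=\s3$), so there is nothing to add.
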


\begin{proof}
If $l=0$, there is nothing to prove. Assume that $l>0$, take $i \leq k$ and $j > k$. The principal submatrix of $M$ defined by indices $1,i,j$ is of the form:
\[
\begin{bmatrix}
\s1 & \s2 & \s2\\
\s3 & \s1 & x\\
\s2 & x & \s1
\end{bmatrix}
\]
If $x=\s3$ then $J_M(\{1,i,j\})=0$, contrary to our assumptions, hence $M_{ij} = x = \s2$. Together with the form of $M_1$, we get the desired result.
\end{proof}

\begin{df}
Let $n \in \N, M \in \S^{n \times n}$ and $S$ be a \spinc set for $M$. We will say that the \spinc pair $(M,S)$ is in \emph{standard form} if:
\begin{enumroman}
\item $S = \{ 1,\ldots,|S| \}$;
\item $M_1 = [\s1,\s2,\ldots,\s2]$;
\item $M$ is distinguished and in the block form
\[
\begin{bmatrix}
A & \s2 & *\\
\s2 & B & *\\
* & * & *\\
\end{bmatrix}
\]
with elements on the diagonal of degrees $k,l,r$;
\item $k \geq l$ and $k+l = |S|$;
\item $A,B$ are self-conjugate;
\item $\smr_1^S(M) = \smr_k^S(M) \neq \smr_{k+1}^S(M) = \ldots = \smr_{k+l}^S(M)$ (it is possible that $l=0$).
\end{enumroman}
\end{df}

We can deduce some further restrictions on a standard form of a matrix.

\begin{lm}
\label{lm:form_outside_support}
Keeping the notation from the above definition, 
let $(M,S)$ be a \spinc pair in the standard form and $k+l < m \leq n$. Then, in the block form
\[
M_m = 
\begin{bmatrix}
a & \conj{a} & *
\end{bmatrix},
\] 
where $a \in \{\s2,\s3\}$ is such that the equation
\begin{equation}
\label{eq:a}
ka+l\conj{a} + (k-l-1)\s2 = a
\end{equation}
holds.
\end{lm}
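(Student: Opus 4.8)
The plan is to read the row $M_m$ off one column-block at a time from the \spinc set condition \eqref{eq:cond_spin}, which $S$ satisfies for \emph{every} $U \in \PA_n$, and then to extract \eqref{eq:a} from one further instance of the same condition. Write $a := M_{m,1}$; since $M$ is distinguished and $m \neq 1$ we have $a \in \{\s2,\s3\}$. Throughout I would use the identification $\{\s0,\s1\}=\Z_2$ of Remark~\ref{rem:injection}, the splitting $|(J_M(U)+U)S|_2 = |J_M(U)S|_2 + |U\cap S|_2$ (linearity of $|\cdot|_2$ from Lemma~\ref{lm:set_algebra} together with distributivity of intersection over symmetric difference), and the three structural facts supplied by the standard form: $M_1=[\s1,\s2,\ldots,\s2]$, self-conjugacy of $A$ (so $M_{i1}=\conj{M_{1i}}=\s3$ for $2\le i\le k$), and the all-$\s2$ off-diagonal block (so $M_{j1}=\s2$ for $k<j\le k+l$). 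Note $m>k+l$, hence $m\notin S$.

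First I would show the first block of $M_m$ is constant, equal to $a$. For $2\le i\le k$ apply \eqref{eq:cond_spin} to the odd set $U=\{1,i,m\}$: here $\binom{3}{2}\equiv 1$ and $|U\cap S|_2=0$, so the condition reads $|J_M(U)S|_2=1$. By the odd-cardinality formula of Lemma~\ref{lm:jmap} this equals $M_U[1]+M_U[i]$, and substituting $M_{11}=M_{ii}=\s1$, $M_{1i}=\s2$, $M_{i1}=\s3$ reduces it to $\s1+a+M_{m,i}=\s1$, i.e.\ $M_{m,i}=a$. The twin set $U=\{1,j,m\}$ with $k<j\le k+l$ handles the second block: now $M_{1j}=M_{j1}=\s2$, and the same computation collapses to $a+M_{m,j}=\s1$, i.e.\ $M_{m,j}=\conj a$. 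This yields the asserted block form $M_m=[a,\conj a,*]$ and, as a by-product, $\smr_m^S(M)=ka+l\conj a$. The cases $k=1$ and $l=0$ are covered automatically, the relevant step simply being vacuous.

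The one genuinely delicate point is identity \eqref{eq:a}, and the main obstacle is to guess the right test set; the trick is to feed the condition the \emph{smallest} set touching $m$, namely the even set $U=\{1,m\}$. Here $\binom{2}{2}\equiv 1$ and $|U\cap S|_2=1$, so \eqref{eq:cond_spin} forces $|J_M(U)S|_2=\s0$. Now I invoke the even-cardinality formula of Lemma~\ref{lm:jmap}; substituting the value of $M_1$ and the entries of $M_m$ found above (for $j\in S\setminus\{1\}$ one has $M_{1j}=\s2$, while $M_{m,j}$ is $a$ on the first block and $\conj a$ on the second) turns the vanishing of $|J_M(U)S|_2$ into
\[
(k-1)a + l\conj a + (k+l-1)\s2 = \s0 .
\]
Since $2l\,\s2=\s0$ one may replace $(k+l-1)\s2$ by $(k-l-1)\s2$, and moving the term $-a=a$ to the right gives exactly $ka+l\conj a+(k-l-1)\s2=a$, which is \eqref{eq:a}. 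Thus the block structure falls out of three-element sets, and the only nonobvious step is recognizing that the two-element set $U=\{1,m\}$, through the even-cardinality part of Lemma~\ref{lm:jmap}, delivers \eqref{eq:a} directly.
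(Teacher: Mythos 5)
Your proof is correct. It follows the same overall strategy as the paper---specialize the \spinc condition \eqref{eq:cond_spin} to small test sets containing $m$ and unpack it via Lemma~\ref{lm:jmap}---but the implementation is genuinely different in its details. The paper uses a single family of two-element sets $U=\{i,m\}$, $i \leq k+l$, with the even-cardinality formula of Lemma~\ref{lm:jmap}; this yields $M_{mi} = \smr_i^S(M)+\smr_m^S(M)+\s1$ uniformly in $i$, so the block structure \emph{and} equation \eqref{eq:a} drop out of one computation, at the price of invoking condition (vi) of the standard form (the dichotomy of the row sums over $S$) to distinguish the two blocks. You instead obtain the block structure from three-element sets $\{1,i,m\}$ and the odd-cardinality formula, pivoting on the explicitly known first row and column (self-conjugacy of $A$ gives $M_{i1}=\s3$ for $2\leq i\le k$, the block form gives $M_{j1}=\s2$ for $k<j\le k+l$) rather than on condition (vi), and then extract \eqref{eq:a} from the single even set $\{1,m\}$. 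Both routes are sound: the paper's is more economical (one family of sets, one formula), while yours never uses the row-sum condition (vi), deriving the $a$/$\conj{a}$ dichotomy directly from the first column, which makes the dependence on the standard form slightly lighter. Your mod-$2$ bookkeeping---splitting $|(J_M(U)+U)S|_2 = |J_M(U)S|_2+|US|_2$, replacing $(k+1)a$ by $(k-1)a$ and $(k+l-1)\s2$ by $(k-l-1)\s2$---is all legitimate since $|\cdot|_2$ is linear, intersection distributes over symmetric difference, and every element of $\S$ has order at most $2$; the vacuous-case remark for $k=1$ and $l=0$ is also handled correctly.
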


\begin{proof}
Let $i \leq k+l$. Using the fact that $(M,S)$ is a \spinc pair in the standard form and Lemma \ref{lm:jmap}, for $U=\{i,m\} $ we get
\begin{align*}
1 = \binom{|U|}{2} &= |(J_M(U)+U)S|_2 = |J_M(U)S|_2+|US|_2\\
&= [i \in S](M_{ii}+M_{mi}) + [m \in S](M_{im}+M_{mm}) \\
&\phantom{=} + \smr^S_i(M) + \smr^S_m(M) + [i \in S] + [m \in S]\\
&= M_{mi} + \smr^S_i(M) + \smr^S_m(M)
\end{align*}
and hence 
\[
M_{mi} = \smr^S_i(M) + \smr^S_m(M) + \s1 = \left\{
\begin{array}{ll}
\smr^S_1(M) + \smr^S_m(M) + \s1 & \text{ if } i \leq k\\[.5em]
\smr^S_1(M) + \smr^S_m(M) & \text{ if } i >k\\
\end{array}
\right.
\]
Since $M$ is distinguished and $i < m$, setting $a:=M_{m1}$ gives us desired form of the $m$-th row of $M$. The equation \eqref{eq:a} follows from the fact that $\smr_1^S = \s1+(k+l-1)\s2$ and $\smr_m^S = ka+l\conj{a}$.
\end{proof}

By the following lemma, certain \spinc pairs can be transformed to standard forms.

\begin{lm}
\label{lm:standard_form_3}
Let $n \geq 3$ be an odd integer and $M \in \S^{n \times n}$ be distinguished. Let $S$ be a \spinc set for $M$. If
\[
J_M(U) \neq 0 \text{ for } U \subset S \text{ and } |U| = 3,
\]
then there exists $g \in G_n$ such that $(gM, gS)$ is a \spinc pair in a standard form.
\end{lm}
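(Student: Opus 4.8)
The plan is to transfer the whole problem to the principal submatrix $A_S:=M|_S$ indexed by $S$, check that $A_S$ meets the hypotheses of Lemmas \ref{lm:standard_form_1} and \ref{lm:standard_form_2}, and then read a standard form off the result. By Lemma \ref{lm:equivalent_spinc_pairs} I may first apply a permutation in $S_n$ so that $S=\{1,\dots,s\}$, where $s:=|S|$; here $s\geq 1$, for if $S=\emptyset$ then $|(J_M(U)+U)S|_2=0$ contradicts \eqref{eq:cond_spin} with $|U|=2$. The matrix $A_S$ is distinguished, and for $U\subseteq S$ and $j\in S$ one has $\smc_j^U(A_S)=\smc_j^U(M)$, so $J_{A_S}(U)=J_M(U)\cap S$.

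Two translations of hypotheses are the technical heart. For a two-element $U\subseteq S$, equation \eqref{eq:cond_spin} gives $|(J_M(U)+U)S|_2=\binom{2}{2}=1$; since $US=U$ and $|U|_2=0$, linearity of $|\cdot|_2$ (Lemma \ref{lm:set_algebra}) yields $|J_{A_S}(U)|_2=|J_M(U)S|_2=1$, which is the hypothesis of Lemma \ref{lm:standard_form_1}. For a three-element $U\subseteq S$, Lemma \ref{lm:jmap} gives $J_M(U)\subseteq U\subseteq S$ (as $|U|$ is odd), so $J_{A_S}(U)=J_M(U)\neq 0$ by assumption --- exactly the triple condition needed by Lemma \ref{lm:standard_form_2}.

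Next I would normalise the first row: conjugating every column $j$ with $M_{1j}=\s3$ (an element of $G_n$ that fixes $S$ and does not change $J_M$, since column conjugations leave $J_M$ invariant) gives $M_1=[\s1,\s2,\dots,\s2]$, and by Corollary \ref{cor:equivalent_spinc_pairs} this keeps a \spinc pair. The decisive observation is that the operations inside the proof of Lemma \ref{lm:standard_form_1} --- a conjugation of the first column and a permutation fixing the index $1$ --- do not disturb this first row, because column-$1$ conjugation fixes $M_{11}=\s1$ and every $M_{1j}$ with $j\neq 1$, while a permutation fixing $1$ merely permutes the off-diagonal $\s2$'s of row $1$. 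Applying Lemma \ref{lm:standard_form_1} to $A_S$ thus produces an integer $k$ with $2k\geq s$ and, in the orbit, the block form $\left[\begin{smallmatrix}A & C\\ C^{t} & B\end{smallmatrix}\right]$ with $A,B$ self-conjugate of degrees $k$ and $l:=s-k$ and the row-sum grouping \eqref{eq:sum_rows_groups}; everything stays inside $S$, so $S=\{1,\dots,s\}$ and the form of $M_1$ survive, row $1$ sits in the block $A$, and $k\geq l\geq 0$, $k+l=s=|S|$, $k\geq 1$.

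Finally I would apply Lemma \ref{lm:standard_form_2} to $A_S$: its three hypotheses (self-conjugacy of $A$, the normalised first row, and $J_{A_S}(\{1,i,j\})\neq 0$ for $1\leq i\leq k<j\leq s$) are now in place, so the off-diagonal block $C$ is entirely $\s2$. Inside the full matrix this gives
\[
M=\begin{bmatrix} A & \s2 & *\\ \s2 & B & *\\ * & * & * \end{bmatrix}
\]
with diagonal degrees $k,l,r=n-s$, i.e. condition (iii) of the standard form; conditions (i), (ii), (iv), (v), (vi) were arranged above, and $(gM,gS)$ remains a \spinc pair by Corollary \ref{cor:equivalent_spinc_pairs}. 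I expect the one genuine obstacle to be the clash between the two normalisations --- making row $1$ all $\s2$ needs column conjugations, whereas those would wreck the self-conjugacy of $A$ and $B$ --- which is resolved exactly by fixing the first row \emph{before} invoking Lemma \ref{lm:standard_form_1}, whose internal moves (as noted) leave that row intact.
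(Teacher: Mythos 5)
Your proof is correct and follows essentially the same route as the paper's: normalise $S=\{1,\dots,|S|\}$ and $M_1=[\s1,\s2,\dots,\s2]$ by an element of $G_n$, pass to the principal submatrix on $S$, translate the \spinc condition into $|J(U)|_2=1$ for $|U|=2$ and the triple condition for $|U|=3$, then apply Lemmas \ref{lm:standard_form_1} and \ref{lm:standard_form_2}. Your explicit verification that the internal moves of Lemma \ref{lm:standard_form_1} (first-column conjugation and a permutation fixing $1$) preserve the normalised first row is a point the paper leaves implicit, and it is a worthwhile addition.
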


\begin{proof}
By Corollary \ref{cor:equivalent_spinc_pairs} $(gM,gS)$ is a \spinc pair for any $g \in G_n$. Our goal is to show that $(M,S)$ can be transformed to a pair in the standard form.

By permuting indices and conjugating columns, we can transform $(M,S)$ to a form where $S=\{1,\ldots,|S|\}$ and $M_1=[\s1,\s2,\ldots,\s2]$.

Let $N$ be the principal submatrix of $M$ defined on the set $S$. $N$ is distinguished and for every $U \in \PA(S) = \PA_{|S|}$ we have
\[
|J_N(U)+U|_2 = |(J_N(U)+U)S|_2 = |(J_M(U)+U)S|_2 = \binom{|U|}{2}.
\]
In particular, $|J_N(U)|_2=1$ if $|U|=2$. Using Lemma \ref{lm:standard_form_1} we can act on $M$ by an element of $G_{|S|} \subset G_n$ such that $N$ becomes
\[
N = \begin{bmatrix}
A & C \\
C^t & B
\end{bmatrix},
\]
where $A$ and $B$ are self-conjugate of degrees $k,l$ respectively, such that $k \geq l$ and
\[
\smr_1(N) = \ldots = \smr_k(N) \neq \smr_{k+1}(N) = \ldots = \smr_{k+l}(N).
\]
Note that $\smr_i(N) = \smr_i^S(M)$ for $1 \leq i \leq |S|=k+l$.

By assumption and Lemma \ref{lm:jmap} we have
\[
J_N(U) = J_M(U)S = J_M(U) \neq 0
\]
for $U \subset S$ and $|U|=3$. By Lemma \ref{lm:standard_form_2} we get that $C=\s2$ and hence the \spinc pair $(M,S)$ was transformed to a standard form.
\end{proof}

\section{\Spinc structures on HW-manifolds}


By the results of previous sections we know that the existence of a \spinc structure on a HW-manifold is equivalent to the existence of a \spinc set for its HW-matrix. We will show that this never happens in dimensions greater than $3$.

\begin{lm}
\label{lm:spinc_pair_n}
Let $n \geq 5$ be an odd integer and $M \in \HWM_n$. There does not exist a \spinc set $S$ for $M$ such that $|S|=n$.
\end{lm}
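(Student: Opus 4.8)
The plan is to argue by contradiction: suppose a \spinc set $S$ with $|S|=n$ exists. Since $\PA_n=\PA(\{1,\ldots,n\})$, the only such set is the unit $S=\{1,\ldots,n\}=1$. The first move is to pass to the standard form of the previous section. This is precisely where the assumption $n\geq 5$ enters: as $S$ is the whole index set, every three-element $U\subseteq S$ is a proper nonempty subset (here $|U|=3<n$ is used), so $J_M(U)\neq 0$ by Corollary \ref{cor:jmap_for_hw}; hence Lemma \ref{lm:standard_form_3} applies and yields $g\in G_n$ with $(gM,gS)$ in standard form. Put $M':=gM$ and note $gS=1$, since the permutation underlying $g$ fixes the full set. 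Before proceeding I would record that $M'$ is again an $n$-HW-matrix: the defining conditions are $G_n$-invariant, because column conjugations fix $\s0$ and $\s1$ (preserving distinguishedness, the vanishing column sums, and, as $\conj{\s1}=\s1$, the map $J$), while permutations satisfy $J_{\sigma M'}(U)=\sigma J_{M'}(\sigma^{-1}U)$. As $r=n-|S|=0$ in the standard form,
\[
M'=\begin{bmatrix} A & \s2\\ \s2 & B\end{bmatrix},
\]
where $A,B$ are self-conjugate of degrees $k$ and $l$ with $k\geq l$, $k+l=n$, and $M'_1=[\s1,\s2,\ldots,\s2]$.

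Next I would split on whether $l>0$. If $l>0$ then $1\leq k\leq n-1$, so $U:=\{1,\ldots,k\}\in\PA_n\setminus\{0,1\}$. Since the column sums of the HW-matrix $M'$ vanish, from $\smc_j(M')=\smc_j(A)+l\cdot\s2=\s0$ we get $\smc_j(A)=l\cdot\s2$ for $j\leq k$, while for $j>k$ the entries of $M'$ in the rows of $U$ all lie in the off-diagonal $\s2$-block, so $\smc_j^U(M')=k\cdot\s2$. Thus every $\smc_j^U(M')$ is a multiple of $\s2$, hence lies in $\{\s0,\s2\}$ and is never $\s1$; therefore $J_{M'}(U)=0$, contradicting Corollary \ref{cor:jmap_for_hw} since $U\neq 0,1$.

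The remaining case $l=0$, $k=n$ makes $M'=A$ self-conjugate with first row $[\s1,\s2,\ldots,\s2]$. Then $M'_{i1}=\conj{M'_{1i}}=\s3$ for $i>1$, so the first column equals $[\s1,\s3,\ldots,\s3]^{t}$ and $\smc_1(M')=\s1+(n-1)\cdot\s3=\s1$ because $n-1$ is even, again contradicting the vanishing of the column sums of $M'$. As both cases are impossible, no \spinc set of size $n$ can exist. The two computations in $\S$ are routine; the step that carries the argument is the reduction to standard form together with the observation that the all-$\s2$ off-diagonal blocks force every partial column sum over the first block to be even, making $J_{M'}$ vanish on a proper nonempty set and so violating effectiveness. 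The degenerate case $l=0$ is the one that genuinely uses the parity of $n$, through $n-1$ being even.
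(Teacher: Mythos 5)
Your proof is correct and follows essentially the same route as the paper: reduce to standard form via Lemma \ref{lm:standard_form_3} (noting $gS=1$), then contradict the HW-matrix conditions using the witness set $U=\{1,\ldots,k\}$. The differences are minor: where the paper derives that $k$ is even from $\s0=\sum_j\smc_j(M)=\sum_i\smr_i(M)=k\cdot\s1$ (which also rules out $l=0$) and then invokes Lemma \ref{lm:jmap}, you compute the partial column sums $\smc_j^U$ directly (getting $l\cdot\s2$ for $j\leq k$ and $k\cdot\s2$ for $j>k$, never $\s1$) and dispose of the case $l=0$ by a first-column computation, and your explicit verification that the HW-matrix conditions are $G_n$-invariant makes rigorous a step the paper uses only tacitly.
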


\begin{proof}
If such a set $S$ exists, then by our assumptions $J_M(U) \neq 0$ for $|U|=3$ and by Lemma \ref{lm:standard_form_3} we can assume that $(M,S)$ is in a standard form:
\[
M = \begin{bmatrix}
A & \s2\\
\s2 & B
\end{bmatrix},
\]
where the degrees of $A,B$ equal $k,l$ respectively, $k \geq l$ and:
\[
\smr_i(M) = \left\{
\begin{array}{ll}
\s1 & \text{ if } i \leq k\\
\s0 & \text{ if } i > k
\end{array}
\right.
\]
By definition of HW-matrices we have
\[
\s0 = \sum_{j=1}^n \smc_j(M) = \sum_{i=1}^n \smr_i(M) = k \cdot \s1,
\]
hence $k$ is even and in particular $k<n$.

Let $U=\{1,\ldots,k\}$. Since it is of even size, $J_M(U)U=0$ by Lemma \ref{lm:jmap}. Moreover, for every $j > k$ we have
\[
M_U[j] = \sum_{i \in U} M_{ij} = \sum_{i \in U} \s2 = \s0.
\]
Hence $J_M(U) = 0$. Contradiction with the fact that $M \in \HWM_n$.
\end{proof}

\begin{lm}
\label{lm:spinc_pair_n-1}
Let $n \geq 5$ be an odd integer and $M \in \HWM_n$. There does not exist a \spinc set $S$ for $M$ such that $|S|=n-1$.
\end{lm}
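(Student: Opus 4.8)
The plan is to assume such an $S$ exists and to contradict the defining nonvanishing property $J_M(U)\neq 0$ of HW-matrices (Corollary \ref{cor:jmap_for_hw}). First I would reduce to a normal form. Since $M\in\HWM_n$ we have $J_M(U)\neq 0$ for every $U\subset S$ with $|U|=3$, so Lemma \ref{lm:standard_form_3} applies and, acting by a suitable $g\in G_n$, we may assume $(M,S)$ is in standard form. This is legitimate because the $G_n$-action preserves both being a \spinc pair (Corollary \ref{cor:equivalent_spinc_pairs}) and being an HW-matrix: column conjugations fix $\s0$, hence preserve $\smc_j=\s0$, and leave $J_M$ unchanged, while permutations act by the conjugation $P_\sigma(\cdot)P_\sigma^{-1}$. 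As $|S|=n-1$, the third diagonal block of the standard form has degree $r=n-|S|=1$: there is a single index, say $n$, outside $S$, and $M$ has the block form with self-conjugate diagonal blocks $A,B$ of degrees $k\geq l$, $k+l=n-1$ (even, since $n$ is odd), the two off-diagonal blocks between them equal to $\s2$, and the last row/column governed by Lemma \ref{lm:form_outside_support}.

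Next I would record the arithmetic forced by the HW condition $\smc_j(M)=\s0$. Summing each column through the blocks gives $\smc_j(A)=l\s2+a$ for $j\leq k$ and $\smc_j(B)=k\s2+\conj a$ for the columns of $B$, where $a\in\{\s2,\s3\}$ is the value of \eqref{eq:a}; solving \eqref{eq:a} with $k+l$ even yields $a=\s2$ when $l$ is even and $a=\s3$ when $l$ is odd. Hence the columns of $A$ all sum to $\s2$ (resp. $\s1$) when $l$ is even (resp. odd), and similarly for $B$. I would also note that every off-diagonal entry of the $n$-th column lies in $\{\s2,\s3\}$, so that for any $U$ of odd cardinality the partial sum $\smc_n^U(M)$ is a sum of an odd number of elements of $\{\s2,\s3\}$ and therefore again lies in $\{\s2,\s3\}$; in particular it is never $\s1$. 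This observation is the key that prevents the extra index $n$ from ever contributing to $J_M(U)$ for odd $U$.

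The argument then splits on the parity of $l$. When $l$ is odd, $A$ is self-conjugate, distinguished, of odd degree $k\geq 3$, has first row $[\s1,\s2,\ldots,\s2]$ and all column sums equal to $\s1$, so it satisfies hypotheses (A1)--(A3) of Lemma \ref{lm:not_existence} and must therefore fail (A4); this produces an odd set $W\subseteq\{1,\ldots,k\}$ with $J_A(W)=0$. Computing $J_M(W)$ through the blocks, the columns of $A$ contribute nothing (that is $J_A(W)=0$), the columns of $B$ give $|W|\s2=\s2\neq\s1$, and the column $n$ gives a value in $\{\s2,\s3\}\neq\s1$ by the parity remark, so $J_M(W)=0$ with $W\neq 0,1$, contradicting $M\in\HWM_n$. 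When $l$ is even and positive, I would instead test $U_1=\{1,\ldots,k\}$ and $U_2=\{k+1,\ldots,k+l\}$: for each, the block-columns give sums in $\{\s0,\s2,\s3\}$, so $J_M(U_i)\subseteq\{n\}$, and HW-nonvanishing forces $\smc_n^{U_1}(M)=\smc_n^{U_2}(M)=\s1$; but $\smc_n(M)=\s0$ forces these two sums to add up to $\s1$, a contradiction. Finally, when $l=0$ (so $k=n-1\geq 4$ is even) the odd set $U=\{2,\ldots,k\}$ works directly: using $\smc_j(A)=\s2$, $A_{1j}=\s2$ and $A_{i1}=\s3$, each partial sum $\smc_j^U(M)$ equals $\s0$, $\s3$, or an element of $\{\s2,\s3\}$, never $\s1$, whence $J_M(U)=0$, again contradicting $M\in\HWM_n$.

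I expect the $l$ odd case to be the main obstacle, since it is the only one in which the column sums of $A$ equal $\s1$ and so one cannot exhibit a bad set $U$ by hand; there one genuinely needs the negative result Lemma \ref{lm:not_existence} to locate an odd principal submatrix of $A$ with no column summing to $\s1$, together with the parity observation on $\{\s2,\s3\}$-entries that stops the extra index $n$ from satisfying the HW condition. The remaining cases, by contrast, are direct computations with a well-chosen $U$ (or pair $U_1,U_2$).
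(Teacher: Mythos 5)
Your proof is correct, and its skeleton coincides with the paper's: reduction to standard form via Lemma \ref{lm:standard_form_3} (with the same justification that the $G_n$-action preserves both the \spinc-pair and HW properties), Lemma \ref{lm:form_outside_support} to pin down the last row, and a split on the parity of $l$. Your case $l>0$ even, with the test sets $U_1=\{1,\ldots,k\}$ and $U_2=\{k+1,\ldots,k+l\}$, is literally the paper's argument. For $l=0$ the paper is quicker — it just notes $J_M(\{1,n\})=0$ — but your odd set $\{2,\ldots,k\}$ works equally well. The genuine divergence is the case $l$ odd. There the paper stays elementary: applying the HW condition to the row set of $B$ (which has odd size, so $J_M(U)\subset U$) forces some column of $B$ to sum to $\s1$, while $\smc_j(M)=\s0$ together with $k$ odd and $\conj{a}=\s2$ forces every column of $B$ to sum to $\s0$ — a two-line contradiction that never touches Lemma \ref{lm:not_existence}. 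You instead work with $A$: its columns all sum to $\s1$, so by Lemma \ref{lm:not_existence} condition (A4) must fail, yielding an odd set $W\subseteq\{1,\ldots,k\}$ (necessarily $|W|\geq 3$, so $W\notin\{0,1\}$) with $J_A(W)=0$, and your parity observation on entries in $\{\s2,\s3\}$ (valid precisely because $n\notin W$ — you should state that restriction, though all your applications satisfy it) then gives $J_M(W)=0$, contradicting the HW condition. This is correct but imports the heavy nonexistence lemma, which the paper reserves for the $|S|=n-2$ case (Lemma \ref{lm:spinc_pair_n-2}). What your route buys is uniformity — the $|S|=n-1$ and $|S|=n-2$ cases are killed by the same mechanism, and your contrapositive use of Lemma \ref{lm:not_existence} (extract the witness $W$, then contradict the HW condition) is in fact more explicit than the paper's terse invocation of that lemma in the $n-2$ proof; what the paper's route buys is brevity in the odd case.
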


\begin{proof}
Similarly as in the proof of the previous lemma we can assume that
\[
M = \begin{bmatrix}
A & \s2 & *\\
\s2 & B & *\\
* & * & \s1
\end{bmatrix}
\]
where the matrices on the diagonal are of degrees $k,l,1$ respectively, $k \geq l$ and $M_1 = [\s1,\s2,\ldots,\s2]$.

Since $k+l=n-1$ is even, $k=l \mod 2$. By Lemma \ref{lm:form_outside_support} we get
\[
M_n = [a,\conj{a},\s1] \text{ and } k\cdot\s1 + \s2 = a.
\]

If $k$ is odd, then $l$ is odd and $a=\s3$. By definition of a HW-matrix, we get $\smc_i(B)=\s1$ for some $k+1\leq i \leq k+l$ and
\[
\s0 = \smc_{k+i}(M) = k \cdot \s2 + \smc_i(B) + \s2 = \smc_i(B) = \s1,
\]
a contradiction.

Assume that $k$ is even. Then $l$ is even and $a=\s2$. If $l=0$, then $M_n=[\s2,\ldots,\s2,\s1]$ and $J_M(\{1,n\})=0$, which cannot happen. Suppose $l>0$. Take $U=\{1,\ldots,k\}, V=\{k+1,\ldots,l\}$. They are both sets of even size. By the form of $M$ and Lemma \ref{lm:column_sums} we have
\[
M_U[i] \in \{\s2,\s3\} \text{ and } M_V[i] = l \cdot \s2 = 0 \text{ if } i \leq k
\]
and
\[
M_U[i] = k \cdot \s2 = 0 \text{ and } M_V[i] \in \{\s2,\s3\} \text{ if } k < i < n.
\]
Since $M$ is a HW-matrix, we get $M_U[n] = M_V[n] = \s1$, but then
\[
\s0 = \smc_n(M) = M_U[n]+M_V[n]+\s1 = \s1,
\]
a contradiction.
\end{proof}

\begin{lm}
\label{lm:spinc_pair_n-2}
Let $n \geq 5$ be an odd integer and $M \in \HWM_n$. There does not exist a \spinc set $S$ for $M$ such that $|S|=n-2$.
\end{lm}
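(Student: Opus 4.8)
The plan is to suppose, for contradiction, that a \spinc set $S$ for $M$ with $|S| = n-2$ exists, to normalise $(M,S)$ to a standard form, and then to recognise the top-left self-conjugate block as a matrix that Lemma \ref{lm:not_existence} forbids.

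First I would invoke Lemma \ref{lm:standard_form_3}; its hypothesis $J_M(U) \neq 0$ for three-element $U \subseteq S$ is immediate from $M \in \HWM_n$ (Corollary \ref{cor:jmap_for_hw}), so after replacing $(M,S)$ by a $G_n$-equivalent pair we may assume it is in standard form. By Corollary \ref{cor:equivalent_spinc_pairs} this is still a \spinc pair, and since simultaneous row/column permutations and column conjugations preserve distinguishedness, the vanishing of all column sums, and the map $J$, we still have $M \in \HWM_n$. Thus $S = \{1,\ldots,n-2\}$, $M_1 = [\s1,\s2,\ldots,\s2]$ and
\[
M = \begin{bmatrix} A & \s2 & * \\ \s2 & B & * \\ * & * & * \end{bmatrix}
\]
with diagonal blocks of degrees $k,l,r$, where $r = n-|S| = 2$, $k \geq l$, $k+l = n-2$, and $A,B$ are self-conjugate.

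Next I would determine the shape of the last two rows. Since $r = 2 > 0$, Lemma \ref{lm:form_outside_support} applies to $m \in \{n-1,n\}$: row $m$ equals $[a_m,\ldots,a_m,\conj{a_m},\ldots,\conj{a_m},*,*]$ with $a_m \in \{\s2,\s3\}$ filling its first $k$ entries and $\conj{a_m}$ the next $l$, where $a_m$ satisfies \eqref{eq:a}. Substituting $\conj{a_m}=a_m+\s1$ and using that $k+l = n-2$ is odd, the equation \eqref{eq:a} collapses to $l\cdot\s1 + (k-l-1)\cdot\s2 = \s0$, which forces $l$ even; hence $k$ is odd and, as $k \geq l$ and $k+l \geq 3$, we get $k \geq 3$. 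Now if $a_{n-1} = a_n$ then every coordinate of $M_{\{n-1,n\}}$ differs from $\s1$ (it equals $\s0$ on $S$, since $a_{n-1}+a_n=\s0$ and $\conj{\s0}=\s0$, and lies in $\{\s2,\s3\}$ on $\{n-1,n\}$ by distinguishedness), so $J_M(\{n-1,n\}) = 0$, contradicting Corollary \ref{cor:jmap_for_hw}. Therefore $a_{n-1}\neq a_n$ and $a_{n-1}+a_n = \s1$.

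The crucial step is to read off the column sums of $A$. For $1 \leq j \leq k$ the $j$-th column of $M$ splits into the $A$-part, the all-$\s2$ block of height $l$, and the two bottom entries, giving $\smc_j(M) = \smc_j(A) + l\cdot\s2 + (a_{n-1}+a_n)$; as $l$ is even and $\smc_j(M)=\s0$, we obtain $\smc_j(A)=\s1$ for every $j$. Now $A$ has degree $k > 1$ and satisfies hypotheses \ref{enum:distinguished+selfconjugate}--\ref{enum:colsum_global} of Lemma \ref{lm:not_existence}: it is distinguished and self-conjugate, its first row is $[\s1,\s2,\ldots,\s2]$, and all its column sums are $\s1$. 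Hypothesis \ref{enum:colsum_submatrices} is inherited from $M$: for odd $W \subseteq \{1,\ldots,k\}$, Lemma \ref{lm:jmap} gives $J_M(W) \subseteq W$ and $\smc_j^W(M) = \smc_j^W(A)$ for $j \in W$, so the odd principal submatrix of $A$ on $W$ has a column summing to $\s1$ exactly because $J_M(W) \neq 0$ (Corollary \ref{cor:jmap_for_hw}). Thus $A$ meets all hypotheses of Lemma \ref{lm:not_existence}, which is impossible, and this contradiction proves the lemma.

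I expect the main obstacle to be assembling hypothesis \ref{enum:colsum_global} of Lemma \ref{lm:not_existence} for $A$, namely that every column of $A$ sums to $\s1$: this requires combining the evenness of $l$ (extracted from \eqref{eq:a}) with the inequality $a_{n-1}\neq a_n$ (extracted from the HW-condition $J_M(\{n-1,n\}) \neq 0$) against the vanishing global column sums of $M$. Verifying hypothesis \ref{enum:colsum_submatrices} for $A$ from the $J$-condition on $M$ is the other delicate point, but it reduces cleanly via Lemma \ref{lm:jmap}.
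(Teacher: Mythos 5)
Your proposal is correct and follows essentially the same route as the paper's own proof: normalise $(M,S)$ to standard form, use Lemma \ref{lm:form_outside_support} to get $k$ odd and $l$ even, derive $a_{n-1}\neq a_n$ from $J_M(\{n-1,n\})\neq 0$, conclude $\smc_j(A)=\s1$ for all $j$, and contradict Lemma \ref{lm:not_existence}. In fact you are somewhat more thorough than the paper, which leaves implicit both the preservation of the HW-property under the $G_n$-action and the verification of hypothesis \ref{enum:colsum_submatrices} for the block $A$.
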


\begin{proof}
Similarly as in the previous two cases, we may assume that
\[
M = \begin{bmatrix}
A & \s2 & * & *\\
\s2 & B & * & *\\
* & * & \s1 & *\\
* & * & * & \s1
\end{bmatrix},
\]
where the blocks on the diagonal are of degrees $k,l,1,1$, respectively and $k \geq l$. Since $k+l = n-2$ is odd, $k=l+1 \mod 2$. By Lemma \ref{lm:form_outside_support} we have
\[
M_{n-1} = [a,\conj{a}, \s1, *] \text{ and } k\cdot \s1 + \conj{a} = a,
\]
hence $k \cdot \s1 = \s1$, $k$ is odd and $l$ is even.

Assume that $M_n = [b,\conj{b}, *, \s1]$. We have $a \neq b$, otherwise
\[
M_{n-1}+M_n = [\s0, \ldots, \s0, c,d],
\]
where $c,d \in \{\s2,\s3\}$, hence $J_M(\{n-1,n\})=0$.


For every $i \leq k$ we get
\[
\s0 = \smc_i(M) = \smc_i(A) + l \cdot \s2 + \s2 + \s3 = \smc_i(A) + \s1,
\]
hence $\smc_i(A) = \s1$. But by Lemma \ref{lm:not_existence} matrix $A$ cannot exist, a contradiction.
\end{proof}

\begin{prop}
\label{prop:spinc_pair}
Let $n\geq 5$ be an odd integer and $M \in \HWM_n$. There does not exist a \spinc set for $M$.
\end{prop}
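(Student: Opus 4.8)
The plan is to argue by contradiction and to normalise an arbitrary \spinc set until its size falls into one of the three ranges already excluded. Suppose $S$ is a \spinc set for $M \in \HWM_n$. Since $M$ is a HW-matrix, Corollary \ref{cor:jmap_for_hw} gives $J_M(U) \neq 0$ for every $U \in \PA_n \setminus \{0,1\}$; in particular $J_M(U)\neq 0$ for all three-element $U \subseteq S$, so Lemma \ref{lm:standard_form_3} together with Corollary \ref{cor:equivalent_spinc_pairs} lets me replace $(M,S)$ by an equivalent \spinc pair in standard form. I then have block degrees $k \geq l$ and $r$, with $k+l = |S|$ and $r = n-|S|$, self-conjugate diagonal blocks $A,B$ of degrees $k,l$, and the cross block relating $A$ and $B$ equal to $\s2$. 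The cases $r \in \{0,1,2\}$ are exactly Lemmas \ref{lm:spinc_pair_n}, \ref{lm:spinc_pair_n-1} and \ref{lm:spinc_pair_n-2}, so the entire remaining content is to rule out $r \geq 3$, i.e. $|S| \leq n-3$.

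For this range I would set up an induction on the odd integer $n$, using those three lemmas at the top and descending to a genuinely smaller HW-matrix carved out of the bottom-right block $D$ of degree $r$. The concrete lever I would exploit first is the column sums of $A$. Using Lemma \ref{lm:form_outside_support} to pin down the $D$-rows in the $S$-columns (each such row is $[a,\conj a,\ast]$ with $a$ constrained by equation \eqref{eq:a}) together with the HW-condition $\smc_j(M)=\s0$, every column sum of $A$ equals the common value
\[
v = l\,\s2 + \sum_{m>k+l} a_m,
\]
which I can evaluate from the parities of $|S|$, $k$, $l$ and $r$. In the branches where $v=\s1$, the block $A$ is distinguished, self-conjugate, has first row $[\s1,\s2,\ldots,\s2]$ and all column sums $\s1$; moreover, for odd $U \subseteq \{1,\ldots,k\}$ Lemma \ref{lm:jmap}\ref{e:dist:odd} gives $J_M(U) \subseteq U$ while Corollary \ref{cor:jmap_for_hw} gives $J_M(U)\neq 0$, so every odd principal submatrix of $A$ has a column summing to $\s1$. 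Thus $A$ meets hypotheses \ref{enum:distinguished+selfconjugate}--\ref{enum:colsum_submatrices} of Lemma \ref{lm:not_existence}, which is the desired contradiction.

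The main obstacle is the complementary branches, where $v \neq \s1$ and the clean reduction to Lemma \ref{lm:not_existence} is unavailable. Here I would try to reproduce the direct argument of Lemma \ref{lm:spinc_pair_n}: choose a suitable even-sized $U$ built from the $A$- and $B$-indices and show $M_U[j]\neq \s1$ for every $j$, forcing $J_M(U)=0$ and contradicting Corollary \ref{cor:jmap_for_hw}. The difficulty is precisely that the entries of $M$ in the $A$- and $B$-rows against the $D$-columns -- the ``$\ast$'' blocks of the standard form -- are not fixed by the normalisation, so for $j$ in the $D$-range the value $M_U[j]$ is not controlled by the data assembled so far. Overcoming this is where the real work lies: I expect to need further applications of the identity \eqref{eq:cond_spin} to small sets $U$ straddling the support and its complement in order to constrain these off-support entries, or to peel $D$ off as a smaller HW-matrix so that the induction hypothesis supplies the missing contradiction. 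Arranging the parity sub-cases so that the case analysis is exhaustive and every branch closes is the technical heart of the argument.
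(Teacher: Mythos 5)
Your reduction to $|S|\leq n-3$ via Lemmas \ref{lm:spinc_pair_n}, \ref{lm:spinc_pair_n-1} and \ref{lm:spinc_pair_n-2} is exactly the paper's first step, but everything you propose after that point misses the key observation, and your sketch does not close. The case $|S|\leq n-3$ needs no induction, no standard form, no analysis of the blocks $A$, $B$, $D$, and no appeal to Lemma \ref{lm:not_existence}: since the complement of $S$ has at least three elements, choose $U\in\PA_n$ with $|U|=3$ and $US=0$. As $|U|$ is odd, part \ref{e:dist:odd} of Lemma \ref{lm:jmap} gives $J_M(U)\subset U$, hence $J_M(U)+U\subseteq U$ and $(J_M(U)+U)S=0$. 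The defining identity \eqref{eq:cond_spin} of a \spinc set then forces
\[
0 = \left|\bigl(J_M(U)+U\bigr)S\right|_2 = \binom{|U|}{2} = \binom{3}{2} = 1
\]
in $\Z_2$, a contradiction. Note that this uses only that $M$ is distinguished; the HW-condition $J_M(U)\neq 0$ is not even needed here.

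The gap in your proposal is therefore genuine: the range $|S|\leq n-3$, which you identify as ``the technical heart'' and leave open (the branches where $v\neq\s1$, the uncontrolled off-support entries, the hoped-for induction on a smaller HW-matrix carved out of $D$), is precisely the part you never complete, and it covers all but three values of $|S|$. The heavy tools you want to deploy there -- standard forms, Lemma \ref{lm:form_outside_support}, Lemma \ref{lm:not_existence} -- are in the paper reserved exclusively for the boundary cases $|S|\in\{n,n-1,n-2\}$, where the complement of $S$ is too small for the disjointness trick to apply; outside that range they are unnecessary. The structural insight you are missing is that the binomial coefficient $\binom{3}{2}$ is odd while an odd-sized $U$ disjoint from $S$ makes the left-hand side of \eqref{eq:cond_spin} vanish identically, so a \spinc set must meet every $3$-element subset of $\{1,\ldots,n\}$ and hence must have size at least $n-2$.
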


\begin{proof}
Let $S$ be a \spinc set for $M$. By Lemmas \ref{lm:spinc_pair_n}, \ref{lm:spinc_pair_n-1} and \ref{lm:spinc_pair_n-2} we can assume that $|S|\leq n-3$. In this case there exists a set $U \in \PA_n$ of size $3$ such that $US=0$. By Lemma \ref{lm:jmap} $J_M(U) \subset U$, hence $(J_M(U)+U)S=0$. Since $S$ is a \spinc set for $M$, we have
\[
0 = |(J_M(U)+U)S|_2 = \binom{|U|}{2} = \binom{3}{2} = 1,
\]
a contradiction.
\end{proof}

Finally we are ready to state the main result of the paper:
\begin{thm}
Let $X$ be a Hantzsche-Wendt manifold of dimension $n \geq 5$. Then $X$ does not admit a \spinc-structure.
\end{thm}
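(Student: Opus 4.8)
The plan is to assemble the final theorem from the substantial machinery already developed in the earlier sections, reducing the geometric question about \spinc structures to a purely combinatorial nonexistence statement about HW-matrices. The conceptual chain is: a HW-manifold $X$ of odd dimension $n \geq 5$ is encoded by a HW-matrix $M \in \HWM_n$ (Remark \ref{rem:hwmats}); by Theorem \ref{thm:spinc_condition2}, $X$ admits a \spinc structure if and only if there exists a \spinc set $S \in \PA_n$ for $M$; and Proposition \ref{prop:spinc_pair} asserts that no such \spinc set exists. So the entire proof is a two-line invocation: take any HW-manifold $X$ of dimension $n \geq 5$, choose a HW-matrix $M$ defining it, and conclude by combining Theorem \ref{thm:spinc_condition2} with Proposition \ref{prop:spinc_pair}.

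\textbf{The only subtlety I would flag} is making sure the dimension hypotheses line up. Theorem \ref{thm:spinc_condition2} and Proposition \ref{prop:spinc_pair} are both stated for odd $n \geq 5$, and a HW-manifold exists only in odd dimensions (by \cite{MR99}, cited in the introduction). Since the statement only assumes $n \geq 5$, I would note that oddness is automatic for HW-manifolds, so no case is lost. This is where I would be careful: the theorem statement does not explicitly say $n$ is odd, so the proof should remark that the HW condition forces $n$ odd before citing the two results that presuppose it.

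\textbf{The genuine content}, which I would emphasize is already discharged in the preceding sections rather than here, lives in Proposition \ref{prop:spinc_pair}. That proposition handles the four regimes $|S| \in \{n, n-1, n-2\}$ via Lemmas \ref{lm:spinc_pair_n}, \ref{lm:spinc_pair_n-1}, \ref{lm:spinc_pair_n-2} (each a careful block-decomposition argument on the standard form of a \spinc pair, ultimately invoking the negative result Lemma \ref{lm:not_existence}), and then dispatches $|S| \leq n-3$ by a clean counting argument: one finds a size-$3$ set $U$ disjoint from $S$, uses $J_M(U) \subset U$ from Lemma \ref{lm:jmap}, and derives the contradiction $0 = \binom{3}{2} = 1$ in $\Z_2$. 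So for the final theorem itself there is no hard step remaining; the main obstacle was overcome earlier, in proving that the standard-form reduction (Lemma \ref{lm:standard_form_3}) applies and that the resulting constrained matrices cannot satisfy the HW column-sum axioms simultaneously with the \spinc equations.

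\begin{proof}
By \cite{MR99}, HW-manifolds exist only in odd dimensions, so $n \geq 5$ is odd. Let $M \in \HWM_n$ be a HW-matrix defining $X$, which exists by Remark \ref{rem:hwmats}. By Theorem \ref{thm:spinc_condition2}, $X$ admits a \spinc structure if and only if there exists a \spinc set for $M$. But by Proposition \ref{prop:spinc_pair} no \spinc set for $M$ exists. Hence $X$ does not admit a \spinc structure.
\end{proof}
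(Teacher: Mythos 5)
Your proposal is correct and follows exactly the paper's own proof, which is the same two-line invocation of Theorem \ref{thm:spinc_condition2} and Proposition \ref{prop:spinc_pair}; your added remark that the HW condition forces $n$ odd (so the hypotheses of those results are automatically met) is a small but legitimate clarification the paper leaves implicit.
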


\begin{proof}
This follows directly from Theorem \ref{thm:spinc_condition2} and Proposition \ref{prop:spinc_pair}.
\end{proof}

\printbibliography

\end{document}